\def\R{{\mathbb R}}  %%	
\def\E{{\mathbb E}}  %%
\def\F{{\mathcal F}}  %%
\def\P{{\mathbb P}}
\renewcommand{\epsilon}{\varepsilon}
\newcommand{\BH}{B^H}
\newcommand{\myset}{|\BH(t)+W(t)+g(t)| \leq \epsilon f(t),~ 0 \leq t \leq T}
\newcommand{\1}{\mathbbm{1}}
\newcommand{\D}{D}
\newcommand{\Remm}[1]{}
\newtheorem{theo}{Theorem}[section]
\newtheorem{lemma}[theo]{Lemma}
\newtheorem{cor}[theo]{Corollary}
\newtheorem{defi}[theo]{Definition}
\newtheorem{model ass}[theo]{Model}
\theoremstyle{remark}
\newtheorem{remark}{Remark}
\newcounter{example}
\newenvironment{example}[1][]{\refstepcounter{example}\par\medskip\noindent%
	\textbf{Example~\theexample. #1} \rmfamily}{\medskip}
\numberwithin{equation}{section}
\begin{document}

% The correct dates will be entered by the editor
%\doi{10.1080/1744250YYxxxxxxxx}
%\issn{1744-2516}
%\issnp{1744-2508}
%\jvol{00} \jnum{00} \jyear{2009} \jmonth{January}
	
%\markboth{Taylor \& Francis and I.T. Consultant}{Stochastics: An International Journal}
	
%\articletype{GUIDE}
	
\title{Optimization of small deviation for  mixed fractional Brownian motion with trend\thanks{Published in \textit{Stochastics}, DOI: 10.1080/17442508.2018.1478835} \thanks{This work was supported by the Montreal Institute of Structured Finance and Derivatives [grant number R2091]. Anne MacKay also acknowledges support from the Natural Science and Engineering Council of Canada [grant number 04869].}}
	
%\author{Anne MacKay$^{\rm a}$, Alexander Melnikov$^{\rm b}$ and  Yuliya Mishura$^{\rm c}$$^\ast$\thanks{$^\ast$Corresponding author. Email: myus@univ.kiev.ua} \thanks{This work was supported by the Montreal Institute of Structured Finance and Derivatives [grant number R2091]. Anne MacKay also acknowledges support from the Natural Science and Engineering Council of Canada [grant number 04869].}\\\vspace{6pt} $^{\rm a}${\em{Department of Mathematics, Universit\'{e} du Qu\'{e}bec \`{a} Montr\'{e}al, Montreal, Canada.}};\\ $^{\rm b}${\em{Department of Mathematics, University of Alberta, Edmonton, Canada.}};\\ $^{\rm c}${\em{Department of Probability, Statistics and Actuarial Mathematics, Taras Shevschenko National University of Kyiv, Kyiv, Ukraine.}}\\\vspace{6mm}To appear in \textit{Stochastics}}

\author{Anne MacKay\footnote{Department of Mathematics, Universit\'{e} du Qu\'{e}bec \`{a} Montr\'{e}al, Montreal, Canada.} \and Alexander Melnikov\footnote{Department of Mathematics, University of Alberta, Edmonton, Canada.} \and Yuliya Mishura\footnote{Department of Probability, Statistics and Actuarial Mathematics, Taras Shevschenko National University of Kyiv, Kyiv, Ukraine.} \thanks{Corresponding author. Email: myus@univ.kiev.ua} %etc.
}

\maketitle

\begin{abstract}
	In this paper, we investigate two-sided bounds for the small ball probability of a mixed fractional Brownian motion with a general deterministic trend function, in terms of respective small ball probability of a mixed fractional Brownian motion without trend. To maximize the lower bound, we consider various ways to split the trend function between the components of the mixed fractional Brownian motion for the application of Girsanov theorem, and we show that the optimal split is the solution of a Fredholm integral equation. We find that the upper bound for the probability is also a function of this optimal split. The asymptotic behaviour of the probability as the ball becomes small is analyzed for zero trend function and for the particular choice of the upper limiting function.
\end{abstract}

\noindent
{\bf Keywords:} Mixed Brownian-fractional Brownian process; fractional calculus; small ball probability; trend; zero trend; maximization of the lower bound; Fredholm equation; Girsanov theorem; two-sided bounds

\section{Introduction}
The present paper is devoted to two-sided bounds for the small ball probability of the form \begin{equation}\label{model-1}\mathbb{P}(|W(t)+B^H(t)+g(t)|\leq \varepsilon f(t),  0\leq t\leq T),\end{equation}
where $W$ is a Wiener process, $B^H$ is an independent  fractional Brownian motion with Hurst index $H<1/2$, $g$ is non-random trend, $f$ is some non-random measurable function (upper limit), and $\varepsilon\rightarrow 0.$ The solution of the general problem of small deviations for centered Gaussian processes $X$ with trend consists of three parts: (a) removal of the trend, (b) determination of the asymptotics in $\varepsilon$ for the centered Gaussian process, and (c) calculation of the precise constants in the asymptotic formulas. The problem has not yet been solved in all its generality, however, some special cases are considered.   For the determination of asymptotics in $\varepsilon$ when $f$ is a constant, see an interesting survey of \cite{li-shao}.  The particular case of the fractional Brownian motion was studied in \cite{shao-1993}, \cite{monrad-rootzen}, see also \cite{Li-Linde}. A survey of \cite{fatalov} is devoted to the calculation of the precise constants, also in the case when $f$ is a constant. An elegant method of calculation of the precise constant
in the case where $f$ is some smooth function and the Gaussian process $X$ is a Wiener process is presented in \cite{novikov}. The latter calculation is based on the martingale properties  of the Wiener process and integrals w.r.t. a Wiener process.
The approach of Novikov was extended to the case of a Gaussian martingale
$X$ and functions $g$ and $f$ which are absolutely continuous w.r.t. a
quadratic characteristic of $X$ in \cite{HM1} and \cite{HM2}. The
method developed in these papers consists of an appropriate change of
measure and some analytical properties of Gaussian processes with
independent increments.

The choice to study the small deviations of the Gaussian process $X=W+B^H$ is motivated by several reasons, and especially by the needs of financial models. On the one hand, at the moment, there is increasing evidence of memory and thus non-Markovian properties in observed return and  price volatility processes, and the behaviour of both prices and volatility is rough enough, which corresponds to a fractional Brownian component with Hurst index $H<1/2$. On the other hand, the possibility of stabilizing such a rough market in the long term is not ruled out, so the inclusion of the Wiener component is reasonable. We note that the mixed Brownian-fractional Brownian model $W+B^H$ was considered for the first time in \cite{cheridito} for the case $H>1/2$, in order to calculate the corresponding option prices in this  market. It was established that such market, unlike the market controlled by pure fBm, is arbitrage-free.    Additionally, the absence of arbitrage possibilities in such a market, in the class of Markov self-financing strategies, was established in \cite{mish-andr}. In the paper \cite{cheridito}, it was also proved that for $H>3/4$, $W+B^H$ is equivalent in measure to some Wiener process, and   generally speaking, the properties of such a mixed model are in many respects similar to properties of the Wiener component. A generalizing result was obtained in the article of \cite{vanzanten}, where it was proved that the linear combination of two independent fractional Brownian motions with Hurst indices $H_1$ and $H_2$ is equivalent in measure to the fBm with   a smaller Hurst index $H_1$ if the difference $H_2-H_1>1/4$. It was also shown that generally speaking, the properties of such a mixed model are in many respects similar to properties of the   component with a smaller Hurst index $H_1$. The corresponding Radon-Nikodym derivative was presented in terms of reproducing kernels. The Radon-Nikodym derivative for the case when  $H_1<1/4, H_2=1/2$ was presented also in \cite{cai}.

 Let  $(\Omega,\F,\{\F_t\}_{t\geq0},\mathbb{P})$  be a filtered probability space supporting all stochastic processes introduced below, and all of them are assumed to be adapted to this filtration. As it was mentioned, we consider a mixed Gaussian process composed of two independent processes: a Wiener process $W$ and a fractional Brownian motion (fBm) $B^H$ with Hurst index $H \in (0,1/2)$. Recall that a an fBm $B^H=\{B^H(t),t\geq 0\}$ is a centered Gaussian process with covariance function
\begin{align*}
	R_H(s,t) \coloneqq \E\left[B^H(s)B^H(t)\right]
	=\frac{1}{2}\left(t^{2H}+s^{2H}-|t-s|^{2H}\right),
	 \end{align*}
for $t, s \geq 0$. Our main goal is to get two-sided bounds for small ball probability presented in \eqref{model-1} as $\varepsilon\rightarrow 0$. The main step is to remove the trend $g$. At first we consider the lower bound. In contrast to the pure model, at this moment an interesting effect arises: we can share the trend  $g$ between the components $W$ and $B^H$, by letting $g=g_W+g_B$, apply Girsanov theorem for the mixed model, and this  leads to different lower bounds. Therefore we  can maximize the lower bound among different choices of trend sharing. It happens so that the optimal choice of the trend components is the solution of Fredholm integral equation of the second kind, and it is established that the equation has a unique solution. Note that the maximization of the lower bound depends only on the trend $g$ and does not involve the properties of function $f$, so a similar lower bound holds for the  probability
$$\mathbb{P}(|W(t)+B^H(t)+g(t)|\leq F(t),  0\leq t\leq T)$$ with any measurable non-random function $F$. Then we were lucky in the sense that it was possible to  apply smoothing procedure to the same expansion $g=g_W+g_B$ of the trend that maximizes the lower bound, and obtain an upper bound. The upper bound  depends on the function $\varepsilon f$ and ``works'' asymptotically as $\varepsilon \rightarrow 0.$ After all, we reduce the small ball probability for the mixed fractional Brownian motion with trend to the respective small ball probability for the mixed fractional Brownian motion without trend. Concerning this probability, its precise value is unknown, but  we give its asymptotics as $\varepsilon \rightarrow 0$.  Note that two-sided bounds for the probability $\P(B_H(t)+g(t)\leq f(t), t\geq 0)$  were obtained in \cite{hash-mish}.

The paper is organized as follows. Section \ref{sec:preliminaries} contains elements of fractional calculus, Wiener integration  w.r.t. fractional Brownian motion with $H < 1/2$ and  Girsanov theorem for fractional Brownian motion. We prove that for smooth functions, the integral w.r.t. a fBm with $H<1/2$, introduced as the integral w.r.t. to an underlying Wiener process with fractionally transformed integrand, coincides with the integral obtained via integration by parts. Section \ref{sec: small-ball} contains the main results: lower and upper bounds for small ball probability \eqref{model-1}. For the reader's convenience, all the results, both auxiliary and main statements, are formulated in this section, but the proofs of the deterministic  results obtained by methods of functional analysis,   are postponed to the Appendix. Section \ref{section-small} briefly describes  the asymptotics of  the small ball probabilities for the centered mixed processes.  The Appendix contains auxiliary  statements related to functional calculus and the proofs of such statements from  Section \ref{sec: small-ball}.

\section{Preliminaries}\label{sec:preliminaries} Let us introduce the necessary objects and their relationships.
\subsection{Basic elements of  fractional calculus} In this subsection  we review basic elements of Riemann-Liouville fractional calculus. For more details  on the topic, see \cite{samko1993fractional}.

\begin{defi}\label{def:RLint}
	The left-sided Riemann-Liouville fractional integral operator of order $\alpha$ over the interval $[0,T]$ is defined for $\alpha > 0$ and $T>0$ by
	\begin{equation*}
	(I_{0}^\alpha f)(t) = \frac{1}{\Gamma(\alpha)} \int_0^t (t-z)^{\alpha-1} f(z) dz,
	\qquad t \in [0,T],
	\end{equation*}
	where $\Gamma(\cdot)$ is the Euler gamma function. The right-sided integral operator of the same order on $[0,T]$ is defined by
	\begin{equation*}
	(I_{T }^\alpha f)(t) = \frac{1}{\Gamma(\alpha)} \int_t^T (z-t)^{\alpha-1} f(z) dz,
	\qquad t \in [0,T].
	\end{equation*}
\end{defi}

It is also  mentioned in \cite{samko1993fractional} that the fractional integrals  $I^\alpha_{0} f$ and $I^\alpha_{T} f$ exist if ${f \in L_1([0,T])}$.

For $p \geq 1$,   denote the classes
\begin{align*}
\mathcal{I}^{\alpha}_+(L_p([0,T])) &= \{f:f = I_{0}^\alpha \varphi \text{ for some } \varphi \in L_p([0,T]) \},\\
\mathcal{I}^{\alpha}_-(L_p([0,T])) &= \{f:f = I_{T^-}^\alpha \varphi \text{ for some } \varphi \in L_p([0,T]) \}.
\end{align*}

The corresponding left-side and right-side fractional derivatives are denoted for $0<\alpha< 1$ by
\begin{align*}
(I_{0}^{-\alpha} f)(t) :=(\D_{0}^\alpha f)(t) &:= \frac{1}{\Gamma(1-\alpha)}\frac{d}{dt}\left(\int_0^t (t-z)^{-\alpha} ~f(z) dz\right),\\
(I_{T}^{-\alpha} f)(t):=(\D_{T^-}^\alpha f)(t) &:= -\frac{1}{\Gamma(1-\alpha)} \frac{d}{dt}\left(\int_t^T (z-t)^{-\alpha} ~f(z) dz\right),
\end{align*}
respectively.

For $f \in \mathcal{I}^\alpha_{\pm}(L_p([0,T]))$, $p \geq 1, 0 \leq \alpha \leq 1$, we have
\begin{equation*}
I^{\alpha}_\pm \D^{\alpha}_\pm f = f.
\end{equation*}
 In order to consider the integral transformations of stochastic processes, we introduce the weighted fractional integral operators, which are defined as
\begin{align}\label{operators}
&(K^H_{0 }f)(t) = C_1 t^{H-1/2}\left(I^{H-1/2}_{0}u^{1/2-H}f(u)\right)(t),\nonumber\\
&(K^{H,*}_{0 }f)(t) = (C_1)^{-1} t^{H-1/2}\left(I^{1/2-H}_{0}u^{1/2-H}f(u)\right)(t),\nonumber\\
&(K^H_{T}f)(t) = C_1 t^{1/2-H}\left(I^{H-1/2}_{T^-}u^{H-1/2}f(u)\right)(t),\nonumber\\
&(K^{H,*}_{T}f)(t) = (C_1)^{-1} t^{1/2-H}\left(I^{1/2-H}_{T^-}u^{H-1/2}f(u)\right)(t),
\end{align}
with $C_1 = \left(\frac{2H\Gamma(H+1/2)\Gamma(3/2-H)}{\Gamma(2-2H)}\right)^{1/2}$. Note that $u$ is a ``dummy'' argument in \eqref{operators}, so for example, the first equality can be rewritten as $(K^H_{0}f)(t) = C_1 t^{H-1/2}\left(I^{H-1/2}_{0}\cdot^{1/2-H}f(\cdot)\right)(t)$.
Operators  $K^H_{T}$ and $K^{H,*}$ were initially introduced  in \cite{jost} and  the whole set \eqref{operators} was considered in \cite{hash-mish}. As it was established in \cite{jost}, for $f\in  L_2([0,T])$ and $H<1/2$
\begin{equation}\label{inverse} K^H_{T}K^{H,*}_{T}f=f.\end{equation}
In what follows we shall denote by $C$ various constants whose values are not important and that vary from line to line.
\begin{remark}\label{remark1} According to Hardy-Littlewood theorem, see, e.g.,  Theorem 3.5  from \cite{samko1993fractional}, roughly speaking, for $0 < \alpha < 1, 1 < p < 1/\alpha$, the fractional integrals $I_{\pm}^\alpha$  are bounded operators from
  $L_p$ into $L_q$ for  $q = p/(1- \alpha p)$. Consider $\alpha=1/2-H$ and $q=2$. In this case $p=\frac{1}{1-H}$. Obviously,  $\frac{1}{1-H}< 1/\alpha=\frac{1   }{1/2-H}$,  consequently, Hardy-Littlewood theorem is applicable. Moreover, let $\varphi\in L_{2}([0,T])$. Since for $H<1/2$ we have that $(1-H)^{-1}<2$, then $\varphi\in L_{\frac{1}{1-H}}([0,T])$. It means that for any function $\varphi\in L_{2}([0,T])$,  $(K^{H,*}_{0 }f)\in L_{2}([0,T])$. Indeed, $u^{1/2-H}\leq t^{1/2-H}$ for $u\leq t$, therefore
  \begin{equation*}\begin{gathered}\int_0^T((K^{H,*}_{0 }f)(t))^2dt=(C_1)^2\int_0^Tt^{2H-1 }\left(I^{1/2-H}_{0}\left(u^{1/2-H}f(u)\right)(t)\right)^2dt\\  \leq C\int_0^T(I^{1/2-H}_{0}|f|(t))^2dt\leq C\int_0^T  f^2(t)dt<\infty.\end{gathered}\end{equation*}
  The same is true, of course, for $K^{H,*}_{T }$ as well. Since $u^{H-1/2}\leq t^{H-1/2}$ for $u\geq t$,
  \begin{equation*}\begin{gathered}\int_0^T\left((K^{H,*}_{T }f)(t)\right)^2dt=(C_1)^2\int_0^Tt^{1-2H }\left(I^{1/2-H}_{0}\left(u^{H-1/2}f(u)\right)(t)\right)^2dt\\ \leq C\int_0^T\left(I^{1/2-H}_{0}|f|(t)\right)^2dt\leq C\int_0^T f^2(t)dt<\infty.\end{gathered}\end{equation*}
\end{remark}

\subsection{Fractional Brownian motion and related processes. Wiener integration w.r.t. fractional Brownian motion with $H<1/2$. Girsanov theorem for fBm}\label{subsec-2.3}
 The  results of this section, except Lemma \ref{boudvar} which, in our opinion, is   new,   come  from \cite{decreusefond1998fractional}, \cite{jost}, \cite{biagini2008stochastic}
 and  \cite{mishura2008stochastic}.
First, we recall the main connections between an fBm and the so-called underlying Wiener process.
\begin{theo}\label{theo-girs}Let $H<1/2$. Then the following statements hold:
\begin{itemize}
\item[$(i)$] Let $B^H=\{B^H(t),~ t\geq 0\}$ be a fractional Brownian motion. Then the process ${B =\{B(t),~ t\geq 0\}}$ defined by
   \begin{align*}B(t)
   &= \frac{\Gamma(3/2-H)}{\Gamma(H+1/2)}\int_0^t(K_T^{H,*}\1_{[0,t]})(s)dB^H(s)\\
   &=(1/2-H)(C_1)^{-1}\Gamma(3/2-H)\int_0^ts^{1/2-H}\int_s^tu^{H-1/2}
    (u-s)^{-1/2-H}du~dB^H(s)\end{align*}
    is a Wiener process that is called the underlying Wiener process.
  \item[$(ii)$] Let $B =\{B(t),~ t\geq 0\}$   be a Wiener process. Then the process $B^H=\{B^H(t),~ t\geq 0\}$ defined by
   \begin{equation*}\begin{gathered}B^H(t)= \int_0^t(K_T^{H}\1_{[0,t]})(s)dB(s)=\frac{C_1}{\Gamma(H+1/2)} \int_0^t\bigg(\left(\frac{t}{s}\right)^{H-1/2}(t-s)^{H-1/2}\\-
   \left(H-1/2\right) s^{1/2-H} \int_s^t (z-s)^{H-1/2}z^{H-3/2}dz\bigg)dB(s) \end{gathered}\end{equation*}
   is a fractional Brownian motion.
   \end{itemize}\end{theo}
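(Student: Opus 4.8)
The plan is to derive both parts from a single mechanism: the weighted fractional operators $K^H_T,K^{H,*}_T$ of \eqref{operators} implement an isometry between the first Wiener chaos generated by $B$ and the one generated by $B^H$, so that each statement reduces to identifying a covariance function. I would treat part $(ii)$ first, where the object appearing there, $\int_0^t(K^H_T\1_{[0,t]})(s)\,dB(s)$, is an ordinary Wiener integral of a deterministic kernel.

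For $(ii)$, set $k_t(\cdot):=(K^H_T\1_{[0,t]})(\cdot)$. First observe that $k_t\in L_2([0,T])$ by $L_2$-boundedness estimates of the kind collected in Remark \ref{remark1} (using $1/2-H<1/2$), and that $k_t$ is supported on $[0,t]$, since $(I^{H-1/2}_{T^-}h)(s)=(\D^{1/2-H}_{T^-}h)(s)$ vanishes for $s$ to the right of the support of $h$. Hence $B^H(t):=\int_0^t k_t(s)\,dB(s)$ is a well-defined centered Gaussian process, and by the It\^{o} isometry $\E[B^H(t)B^H(u)]=\int_0^{t\wedge u}k_t(s)k_u(s)\,ds$. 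The heart of the matter is to show this equals $R_H(t,u)$. I would unfold $K^H_T$ via \eqref{operators} into the composition ``multiply by a power, apply $I^{H-1/2}_{T^-}$, multiply by a power'', then use the fractional integration-by-parts rule $\int_0^T(I^{\alpha}_{T^-}\varphi)(s)\psi(s)\,ds=\int_0^T\varphi(s)(I^{\alpha}_{0}\psi)(s)\,ds$ together with the semigroup law $I^{\alpha}_{0}I^{\beta}_{0}=I^{\alpha+\beta}_{0}$; after these reductions the double fractional operator collapses and the remaining one-dimensional integral is an elementary Beta-function computation producing $\tfrac{1}{2}(t^{2H}+u^{2H}-|t-u|^{2H})$. (Equivalently, one may verify only $\E[B^H(t)^2]=t^{2H}$ and $\E[(B^H(t)-B^H(u))^2]=|t-u|^{2H}$, the latter also furnishing a continuous modification via Kolmogorov's criterion, and then polarize.) Writing $k_t$ out explicitly, i.e.\ evaluating $(I^{H-1/2}_{T^-}u^{H-1/2}\1_{[0,t]}(u))(s)$, yields the displayed closed form, with prefactor $C_1/\Gamma(H+1/2)$ the Molchan--Golosov normalising constant.

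For $(i)$, $B^H$ is now given and the integral $\int_0^t(K^{H,*}_T\1_{[0,t]})(s)\,dB^H(s)$ must be read as a Wiener integral with respect to $B^H$, $H<1/2$; by the defining relation for that integral --- under which $\int_0^t\1_{[0,t]}\,dB^H=B^H(t)$, so that by part $(ii)$ the integrand operator is $K^H_T$ --- it equals, up to an explicit constant, $\int_0^t(K^H_TK^{H,*}_T\1_{[0,t]})(s)\,dB(s)$, and for smooth integrands this prescription agrees with the integration-by-parts one (Lemma \ref{boudvar}). The inversion identity \eqref{inverse}, $K^H_TK^{H,*}_Tf=f$, then collapses this to $\int_0^t\1_{[0,t]}(s)\,dB(s)=B(t)$; the factor $\Gamma(3/2-H)/\Gamma(H+1/2)$ in the statement is exactly the normalisation that makes the result equal $B(t)$, hence a Wiener process, and consistency of the two constructions (that composing $(i)$ and $(ii)$ returns the original process) is again \eqref{inverse}. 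In both parts the second displayed formula is just the first with $K^H_T\1_{[0,t]}$, resp.\ $K^{H,*}_T\1_{[0,t]}$, written out through the definitions in \eqref{operators}.

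The main obstacle is the covariance identity in $(ii)$. Because $H<1/2$, the operator $I^{H-1/2}_{T^-}$ is a genuine fractional derivative (a negative-order fractional integral), so one must be careful about differentiation under the integral sign and about integrability at the diagonal singularity, and $k_t$ carries the characteristic two-term shape rather than a pure power; treating this cleanly, instead of through a brute-force double integral, is precisely the purpose of the integration-by-parts and semigroup reduction, and making that reduction rigorous --- justifying the adjointness formula and the composition rule on the relevant $L_p$ classes via the Hardy--Littlewood bounds behind Remark \ref{remark1} --- is where the real effort goes. The remainder is chasing definitions and invoking the invariance already recorded in \eqref{inverse}.
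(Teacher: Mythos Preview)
The paper does not prove Theorem \ref{theo-girs}; it opens the subsection by declaring that, apart from Lemma \ref{boudvar}, the results there ``come from'' \cite{decreusefond1998fractional}, \cite{jost}, \cite{biagini2008stochastic} and \cite{mishura2008stochastic}, and the theorem is simply stated. So there is no argument in the paper for your attempt to be compared against. Your outline is in fact the standard route taken in those sources: verify in $(ii)$ that the deterministic kernel $k_t=K^H_T\1_{[0,t]}$ reproduces the fBm covariance via the It\^o isometry (the Molchan--Golosov computation you describe, carried out through fractional integration by parts and the semigroup law), and then read off $(i)$ from the inversion identity \eqref{inverse}.

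One point to tighten. As written, your argument for $(i)$ invokes the identity $\int f\,dB^H=\int(K^H_Tf)\,dB$, but in this paper that identity is the \emph{definition} of the left-hand side and is introduced only \emph{after} Theorem \ref{theo-girs}, using the underlying $B$ supplied by part $(i)$; appealing to it to construct $B$ is therefore circular, and the same applies to invoking Lemma \ref{boudvar}. The repair is implicit in what you wrote: the covariance identity established for $(ii)$ is a purely deterministic statement about the kernels, and together with \eqref{inverse} it yields $\langle K^{H,*}_T\1_{[0,s]},K^{H,*}_T\1_{[0,t]}\rangle_{K_H}= \langle\1_{[0,s]},\1_{[0,t]}\rangle_{L_2}=s\wedge t$. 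Interpreting the integral in $(i)$ intrinsically (extend from step functions by the $K_H$ inner product, which is available directly from $R_H$), one then computes $\E[B(s)B(t)]$ straight from $R_H$ and the explicit integrand, obtaining $s\wedge t$. That direct covariance calculation, rather than the paper's later definition of $\int\cdot\,dB^H$, is the clean way to close $(i)$.
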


Further, having a notion of the underlying Wiener process,
one  can  introduce and apply the Wiener integral w.r.t. an fBm as follows (see \cite{jost}).
For $T>0$ and $H \in (0,1/2)$, the Wiener integral w.r.t. an fBm is defined as
	\begin{align*}
	\int_0^T f(t) dB^H(t) = \int_0^T (K^H_T f)(t) dB(t).
	\end{align*}
	Taking into account \eqref{inverse}, the class $K_H([0,T])$ of admissible functions $f$ is described as
$$K_H([0,T])=\{f:[0,T]\rightarrow \R:\;\text{there exists}\;\varphi\in L^2([0,T])\;\text{such that }\; f=(K^{H,*}_{T}\varphi)\} \}.$$
 The class of elementary (step)  functions is dense in $K_H([0,T])$ w.r.t. the inner product defined by
  $(f,g)_{K_H([0,T])}=(K^H_T f, K^H_T g)_{L^2([0,T])}.$  Moreover, it is true that the space $K_H([0,T])$ is complete  and that the  operator
   $$J_T(f)=\int_0^T f(t) dB^H(t) = \int_0^T (K^H_T f)(t) dB(t)$$ is an isometry on  $K_H([0,T])$ if and only if $H<1/2.$ For these results see \cite{pipiras} and \cite{jost}.
   Now, let function $f$ be smooth, for example, let $f\in C^{(1)}([0,T])$, i.e., $f$ is continuously differentiable. Evidently, the Riemann integral $\int_0^TB^H(t)df(t)$ is well-defined. Let us prove that in this case  $\int_0^T f(t) dB^H(t)$ can be considered as the result of integration by parts.
   \begin{lemma}\label{boudvar} Let $f\in C^{(1)}([0,T])$. Then
   \begin{equation*} \int_0^T f(t) dB^H(t)=B^H(T)f(T)-\int_0^TB^H(t)df(t).
   \end{equation*}
   \end{lemma}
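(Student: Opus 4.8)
The plan is to reduce everything to the underlying Wiener process $B$ and to an interchange‑of‑integrals argument, exploiting that Theorem~\ref{theo-girs}(ii) already identifies the integral for the simplest functions. By definition, $\int_0^T f(t)\,dB^H(t)=\int_0^T(K^H_Tf)(t)\,dB(t)$, and by that theorem $\int_0^T(K^H_T\1_{[0,t]})(s)\,dB(s)=B^H(t)$ for every $t$; hence, since $\1_{[u,T]}=\1_{[0,T]}-\1_{[0,u]}$ and the Wiener integral is linear, $\int_0^T(K^H_T\1_{[u,T]})(t)\,dB(t)=B^H(T)-B^H(u)$ for all $0\le u\le T$. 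The idea is to write a $C^{(1)}$ function as a continuous superposition of the ``blocks'' $\1_{[u,T]}$ and to push the integrals through.

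Concretely, for $t\in[0,T]$ I would write $f(t)=f(0)+\int_0^t f'(u)\,du=f(0)\,\1_{[0,T]}(t)+\int_0^T f'(u)\,\1_{[u,T]}(t)\,du$, apply $K^H_T$ to both sides, and interchange it with $\int_0^T(\cdot)\,du$ (a deterministic Fubini step), obtaining
$$(K^H_Tf)(t)=f(0)(K^H_T\1_{[0,T]})(t)+\int_0^T f'(u)(K^H_T\1_{[u,T]})(t)\,du.$$
The quantitative input, here and below, is $\|K^H_T\1_{[u,T]}\|_{L^2([0,T])}^2=\E[(B^H(T)-B^H(u))^2]=(T-u)^{2H}$, which follows from the isometry property of $J_T$ on $K_H([0,T])$, together with the boundedness of $f'$ on $[0,T]$. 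In particular this shows $K^H_Tf\in L^2([0,T])$, so the left‑hand side of the lemma, defined as $\int_0^T(K^H_Tf)(t)\,dB(t)$, is meaningful.

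Next I would integrate the displayed identity against $dB(t)$ and invoke the classical stochastic Fubini theorem for Wiener integrals, which is legitimate because
$$\int_0^T\!\!\int_0^T (f'(u))^2\,\bigl((K^H_T\1_{[u,T]})(t)\bigr)^2\,dt\,du=\int_0^T (f'(u))^2(T-u)^{2H}\,du\le \|f'\|_\infty^2\,\frac{T^{2H+1}}{2H+1}<\infty.$$
Using the two Wiener‑integral evaluations recorded in the first paragraph,
$$\int_0^T f(t)\,dB^H(t)=f(0)B^H(T)+\int_0^T f'(u)\bigl(B^H(T)-B^H(u)\bigr)\,du.$$
Since $\int_0^T f'(u)B^H(T)\,du=B^H(T)\bigl(f(T)-f(0)\bigr)$ and $\int_0^T B^H(u)f'(u)\,du=\int_0^T B^H(t)\,df(t)$, the right‑hand side collapses to $B^H(T)f(T)-\int_0^T B^H(t)\,df(t)$, which is the claim.

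The main obstacle is making the two interchanges rigorous, and especially the deterministic one: $K^H_T$ is not a plain integral operator (it contains a fractional derivative of order $1/2-H$), so to commute it with $\int_0^T(\cdot)\,du$ it is cleaner to argue directly at the level of $K_H([0,T])$ — observing that $u\mapsto f'(u)\1_{[u,T]}$ is continuous, hence Bochner‑integrable, as a $K_H([0,T])$‑valued map (because $\|\1_{[u,T]}-\1_{[v,T]}\|_{K_H}^2=\E[(B^H(u)-B^H(v))^2]=|u-v|^{2H}$), and that the bounded linear operator $J_T$ commutes with such integrals — and only then to apply the stochastic Fubini. Everything else is elementary once the bound $\|K^H_T\1_{[u,T]}\|_{L^2([0,T])}=(T-u)^{H}$ and the continuity of $f'$ are available. (An alternative that avoids Fubini is to check the identity first for right‑continuous step functions, where both sides are finite sums, and then pass to the limit for $f\in C^{(1)}([0,T])$ using the isometry $J_T$ on the left and convergence of Riemann–Stieltjes sums on the right; but the route above is shorter.)
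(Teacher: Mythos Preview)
Your proof is correct and follows a genuinely different route from the paper's. The paper argues by explicit step-function approximation: it takes dyadic step functions $f_n$, bounds $|K_T^H(f-f_n)(t)|$ by integrating by parts inside the fractional derivative and controlling the resulting pieces, deduces $\int_0^T f_n\,dB^H\to\int_0^T f\,dB^H$ in $L^2(\Omega)$, and then uses the Helly--Bray theorem to pass to the limit on the Riemann--Stieltjes side $\int_0^T B^H\,df_n$. Your argument instead expresses $f-f(0)$ as the superposition $\int_0^T f'(u)\1_{[u,T]}\,du$, reads this as a Bochner integral in $K_H([0,T])$ using only the isometry $\|\1_{[u,v]}\|_{K_H}=|u-v|^H$ and the continuity of $f'$, and commutes it with the bounded operator $J_T$. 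This is more conceptual and avoids all explicit kernel computations; the one step you pass over lightly is identifying the $K_H$-valued Bochner integral with the function $f-f(0)$, which follows once you note that $K_H([0,T])\hookrightarrow L^2([0,T])$ continuously (a consequence of Remark~\ref{remark1}), so that the Bochner integral in $K_H$ coincides a.e.\ with the pointwise integral. The paper's hands-on estimates, by contrast, are set up so as to extend directly to the weighted integrands $t^{1/2-H}f(t)$ treated in Corollary~\ref{coroldiffer}. Your parenthetical alternative at the end---verify the identity for step functions and pass to the limit via the isometry and Riemann--Stieltjes convergence---is essentially the paper's argument.
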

   \begin{proof} Consider the sequence  of dyadic partitions of the interval $[0,T]$:
   $$\pi_n=\{t_{k,n}=\frac{Tk}{2^n},\;0\leq k\leq 2^n\},$$ and denote $\Delta_n=\frac{T}{2^n}$. Choose the sequences of functions
   $$f_n(t)=\sum_{k=0}^{2^n-1} f(t_{k,n})\1_{(t_{k,n},t_{k+1,n}]}(t),\;q_n(t)=\sum_{k=0}^{2^n-1} f^\prime(t_{k,n})(t-t_{k,n})  \1_{(t_{k,n},t_{k+1,n}]}(t).$$
   Then $f_n, q_n$ are piece-wise differentiable, and
   $$\delta_n:=\sup_{t\in[0,T]\setminus\{\pi_n\}}|f^\prime(t)-q^\prime_n(t)|\leq \sup_{s,\; t\in[0,T],\;|s-t|\leq \Delta_n}|f^\prime(t)-f^\prime(s)|\rightarrow 0\;, n\rightarrow \infty.$$
   Furthermore,  $\epsilon_n:=\sup_{t\in[0,T]}|f(t)-f_n(t)|\rightarrow 0$ as $n\rightarrow \infty$, and additionally  the  variation admits the total bound $$V(f_n,[0,T])\leq V(f,[0,T])<\infty.$$ Let us write the formula for operator $K_T^H$:
   \begin{equation}\begin{gathered}\label{operator int}  K_T^H(f-f_n)(t) =\frac{C_1t^{1/2-H}}{\Gamma(H+1/2)} \frac{d}{dt}
   \left(\int_t^T(s-t)^{H-1/2}s^{H-1/2}(f(s)-f_n(s))ds\right).
   \end{gathered}\end{equation}
    Taking into account the fact that $f -f_n$ is piecewise differentiable and applying   integration by parts we get that
    \begin{equation}\begin{gathered}\label{operator int-1}\int_t^T(s-t)^{H-1/2}s^{H-1/2}(f(s)-f_n(s))ds
    =\frac{(T-t)^{H+1/2}}{H+1/2}T^{H-1/2}(f(T)-f_n(T))\\-\int_t^T\frac{(s-t)^{H+1/2}}{H+1/2}\left((H-1/2)s^{H-3/2}(f(s)-f_n(s))+
    s^{H-1/2}(f^\prime(s)-q^\prime_n(s))\right)ds\\-\int_t^T\frac{(s-t)^{H+1/2}}{H+1/2} s^{H-1/2}q^\prime_n(s)ds.
    \end{gathered}\end{equation}
    Substituting \eqref{operator int-1} into \eqref{operator int}, we get that \begin{equation}\begin{gathered}\label{operator-K}K_T^H(f-f_n)(t) = C t^{1/2-H}\bigg( (T-t)^{H-1/2} T^{H-1/2}(f(T)-f_n(T))\\+\int_t^T {(s-t)^{H-1/2}} \left((H-1/2)s^{H-3/2}(f(s)-f_n(s))+
    s^{H-1/2}(f^\prime(s)-q^\prime_n(s))\right)ds\\+\int_t^T (s-t)^{H-1/2}  s^{H-1/2}q^\prime_n(s)ds\bigg).
     \end{gathered}\end{equation}
 Note that $$\int_t^T (s-t)^{H-1/2}  s^{H-1/2}q^\prime_n(s)ds=\sum_{(t_{k,n},t_{k+1,n}]\cap(t,T]\neq \emptyset}\int_{(t_{k,n}\vee t,t_{k+1,n}]}(s-t)^{H-1/2}s^{H-1/2}(s-t_{k,n})ds.$$
  For example, if  $(t_{k,n},t_{k+1,n}]\subset(t,T]$, then
 \begin{equation*}\begin{gathered} \int_{(t_{k,n},t_{k+1,n}]}(s-t)^{H-1/2}s^{H-1/2}(s-t_{k,n})ds\\
 \leq C\int_{(t_{k,n},t_{k+1,n}]}(s-t_{k,n})^{H-1/2}(s-t_{k,n})^{H-1/2}(s-t_{k,n})ds\\ \leq C\int_{(t_{k,n},t_{k+1,n}]}(s-t_{k,n})^{2H}ds\leq C2^{-n(1+2H)}.\end{gathered}\end{equation*}
 Therefore
    \begin{equation}\begin{gathered}\label{up-up}|K_T^H(f-f_n)(t)| \leq C(\epsilon_n+\delta_n)t^{1/2-H}\left((T-t)^{H-1/2}+ \right.\\
    \left. \int_t^T (s-t)^{H-1/2}\left(s^{H-3/2}+s^{H-1/2}\right)ds\right)+C2^{-2nH} \\ \leq C(\epsilon_n+\delta_n)t^{1/2-H}\left((T-t)^{H-1/2}+ \int_t^T (s-t)^{H-1/2} s^{H-3/2} ds\right)+C2^{-2nH}\\\leq C(\epsilon_n+\delta_n)t^{1/2-H}\left((T-t)^{H-1/2}+ t^{2H-1}\int_1^\infty (s-1)^{H-1/2} s^{H-3/2} ds\right)+C2^{-2nH}\\ \leq C(\epsilon_n+\delta_n)\left(t^{1/2-H} (T-t)^{H-1/2}+t^{H-1/2}\right)+C2^{-2nH}.
    \end{gathered}\end{equation}
The above upper bound  \eqref{up-up} means that  $\int_0^T|K_T^H(f-f_n)(t)|^2dt\rightarrow 0$ as $n\rightarrow \infty,$ and in turn, it means    that $\E\left(\int_0^T f(t) dB^H(t)-\int_0^T f_n(t) dB^H(t)\right)^2\rightarrow 0$ as $n\rightarrow \infty.$

 It means that there exists a subsequence $n_k$ such that $\int_0^T f_{n_k}(t) dB^H(t)\rightarrow\int_0^T f(t) dB^H(t)$ a.s.  Without loss of generality denote this subsequence $n$.
 Now, for a step function $f_n$, $\int_0^T f_n(t) dB^H(t)=f_n(T)B^H(T)-\int_0^T B^H(t) df_n(t)$. Total boundedness of   variation
 together with uniform convergence of $f_n$ to $f$ allows to apply Helly-Bray theorem and conclude that for any $\omega \in \Omega,\; \mathbb{P}(\Omega)=1$, there exists a subsequence $n_k(\omega)$ such that $\int_0^T B^H(t) df_{n_k}(t)\rightarrow \int_0^T B^H(t) df(t).$ It means that with probability 1, $\int_0^T f(t) dB^H(t)=f(T)B^H(T)-\int_0^T B^H(t) df(t)$, and the lemma is proved.
\end{proof}
\begin{cor}\label{coroldiffer} Consider the function of the form $\varphi(t)=t^{1/2-H} f(t),$ where $f\in C^{(1)}([0,T])$. Then it is possible that $\varphi$ is  not in $C^{(1)}([0,T])$. However, it is bounded, continuous and of bounded variation. It is possible to choose the approximating sequence of functions $f_n=f_n(t)$ in such a way that both $f_n$ and $\varphi_n(t)=t^{H-1/2} f_n(t)$ be in $C^{(1)}([0,T])$, $f_n(t)\rightarrow f(t)$,  $f^\prime_n(t)\rightarrow f^\prime(t)$, $\varphi_n(t)\rightarrow \varphi(t)$ and $\varphi^\prime_n(t)\rightarrow \varphi^\prime(t)$ point-wise, $\varphi_n$ and $\varphi^\prime_n$ are totally bounded  and the total variation $V(f_n,[0,T])<\infty$. In this case, again by Helly-Bray theorem we   conclude that for any $\omega \in \Omega,\; \mathbb{P}(\Omega)=1$ there exists a subsequence $n_k(\omega)$ such that $\int_0^T B^H(t) d\varphi_{n_k}(t)\rightarrow \int_0^T B^H(t) d\varphi(t).$ Moreover, similarly to \eqref{operator int} and \eqref{operator-K}, but taking into account that $s^{H-1/2}(\varphi_{n}(s)-\varphi(s))= f(s)-f_n(s),$ we can conclude that
   \begin{equation*}\begin{gathered}K_T^H(\varphi-\varphi_n)(t) = C t^{1/2-H}\bigg( (T-t)^{H-1/2} (f(T)-f_n(T))\\+\int_t^T {(s-t)^{H-1/2}}  (f^\prime(s)-f^\prime_n(s)) ds,
     \end{gathered}\end{equation*}
     and therefore $\int_0^T|K_T^H(\varphi-\varphi_n)(t)|^2dt\rightarrow 0$ as $n\rightarrow \infty.$ It means that
     $\int_0^T \varphi(t) dB^H(t)=\varphi(T)B^H(T)-\int_0^T B^H(t) d\varphi(t).$

   \end{cor}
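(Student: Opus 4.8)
The plan is to mimic the proof of Lemma~\ref{boudvar} with $\varphi$ playing the role of $f$. Lemma~\ref{boudvar} cannot be applied to $\varphi$ directly, since $\varphi\notin C^{(1)}([0,T])$ in general: its derivative $(1/2-H)t^{-1/2-H}f(t)+t^{1/2-H}f'(t)$ blows up at the origin like $t^{-1/2-H}$ whenever $f(0)\neq 0$. On the other hand $\varphi$ is continuous on $[0,T]$ (because $t^{1/2-H}\to 0$ and $f$ is bounded), bounded, and of bounded variation (it is the product of the bounded monotone function $t^{1/2-H}$ with the $C^{(1)}$, hence BV, function $f$; equivalently $\varphi'\in L_1([0,T])$, since $1/2+H<1$). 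So I would approximate $\varphi$ by functions $\varphi_n\in C^{(1)}([0,T])$, apply Lemma~\ref{boudvar} to each of them to get $\int_0^T\varphi_n(t)\,dB^H(t)=\varphi_n(T)B^H(T)-\int_0^T B^H(t)\,d\varphi_n(t)$, and pass to the limit on both sides.

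For the approximating sequence I would fix a smooth cutoff $\chi_n\colon[0,T]\to[0,1]$ with $\chi_n\equiv 0$ on $[0,1/n]$, $\chi_n\equiv 1$ on $[2/n,T]$ and $|\chi_n'|\le Cn$, and set $f_n:=f\chi_n$, $\varphi_n(t):=t^{1/2-H}f_n(t)$. Then $f_n\in C^{(1)}([0,T])$, and $\varphi_n\in C^{(1)}([0,T])$ as well: on $[0,1/n]$ one has $\varphi_n\equiv 0$, which avoids the only point where $t^{1/2-H}$ fails to be smooth, while on $[1/n,T]$ it is a product of $C^{(1)}$ functions, the two pieces matching in value and derivative at $1/n$ because $\chi_n$ and $\chi_n'$ vanish there. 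Moreover $f_n\to f$, $f_n'\to f'$, $\varphi_n\to\varphi$ and $\varphi_n'\to\varphi'$ pointwise on $(0,T]$, $\varphi_n\to\varphi$ uniformly on $[0,T]$ (with $\varphi_n(0)=0=\varphi(0)$), $\sup_n\|\varphi_n\|_\infty\le T^{1/2-H}\|f\|_\infty$, and $V(f_n,[0,T])<\infty$. The only point requiring a short computation is the uniform bound $\sup_n V(\varphi_n,[0,T])<\infty$: writing $V(\varphi_n,[0,T])=\int_0^T|\varphi_n'(t)|\,dt$, the term $(1/2-H)t^{-1/2-H}f\chi_n$ is dominated by $Ct^{-1/2-H}\in L_1([0,T])$ (here $1/2+H<1$), the term $t^{1/2-H}f'\chi_n$ is clearly bounded, and the term $t^{1/2-H}f\chi_n'$ contributes at most $Cn\int_{1/n}^{2/n}t^{1/2-H}\,dt\le C(2/n)^{1/2-H}$.

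The key estimate is on the underlying-Wiener side. Because the weight $u^{H-1/2}$ built into $K^H_T$ cancels the factor $u^{1/2-H}$ in $\varphi-\varphi_n=u^{1/2-H}(f-f_n)$ (so $u^{H-1/2}(\varphi-\varphi_n)(u)=(f-f_n)(u)$), the derivation of \eqref{operator-K} simplifies, and one integration by parts as in \eqref{operator int-1}, with $f-f_n=f(1-\chi_n)\in C^{(1)}([0,T])$, yields
$$K_T^H(\varphi-\varphi_n)(t)=Ct^{1/2-H}\left((T-t)^{H-1/2}(f-f_n)(T)+\int_t^T(s-t)^{H-1/2}(f'-f_n')(s)\,ds\right).$$
For $n$ so large that $2/n<T$, both terms vanish on $[2/n,T]$ (there $f_n\equiv f$), and for $t<2/n$ the integrand is supported in $[t,2/n]$, where $|(f-f_n)'|=|f'(1-\chi_n)-f\chi_n'|\le Cn$; hence $|K_T^H(\varphi-\varphi_n)(t)|\le Ct^{1/2-H}n(2/n-t)^{H+1/2}\le C(2/n)^{1/2-H}n(2/n)^{H+1/2}=C$, a bound uniform in $t$ and $n$ supported on an interval of length $2/n$, so $\int_0^T|K_T^H(\varphi-\varphi_n)(t)|^2\,dt\le 2C^2/n\to 0$. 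The same computation shows $K_T^H\varphi\in L_2([0,T])$ — it is bounded by $C(t^{1/2-H}(T-t)^{H-1/2}+t^{1/2-H})$, which is square integrable since $1-2H>-1$ and $2H-1>-1$ — so $\varphi\in K_H([0,T])$ and $\int_0^T\varphi\,dB^H$ is a well-defined $L_2(\Omega)$ random variable. By the isometry property of $J_T$ on $K_H([0,T])$ (valid precisely because $H<1/2$), $\E\left(\int_0^T(\varphi-\varphi_n)(t)\,dB^H(t)\right)^2=\int_0^T|K_T^H(\varphi-\varphi_n)(t)|^2\,dt\to 0$, so along a subsequence, which without loss of generality I denote again by $n$, $\int_0^T\varphi_n\,dB^H\to\int_0^T\varphi\,dB^H$ almost surely.

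It then remains to pass to the limit in the identity of Lemma~\ref{boudvar} for $\varphi_n$. On the left-hand side the a.s. convergence just obtained applies. On the right, $\varphi_n(T)=\varphi(T)$ for $n$ large, so $\varphi_n(T)B^H(T)\to\varphi(T)B^H(T)$; and for each fixed $\omega$, since $B^H(\cdot,\omega)$ is continuous, $\varphi_n\to\varphi$ uniformly and $\sup_nV(\varphi_n,[0,T])<\infty$, the Helly--Bray theorem provides a further, $\omega$-dependent subsequence along which $\int_0^TB^H(t,\omega)\,d\varphi_n(t)\to\int_0^TB^H(t,\omega)\,d\varphi(t)$. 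Along a common subsequence the identity reads $\int_0^T\varphi\,dB^H=\varphi(T)B^H(T)-\int_0^TB^H(t)\,d\varphi(t)$ with probability $1$, and since neither side depends on the subsequence, this holds outright. The step I expect to be the main obstacle is the construction of the approximating sequence: one is forced to take $f_n(0)=0$ (otherwise $\varphi_n=t^{1/2-H}f_n$ cannot be $C^{(1)}$ at $0$, so the pointwise convergence $f_n\to f$ can only be asserted on $(0,T]$), and one must simultaneously keep $V(\varphi_n,[0,T])$ uniformly bounded and make $K_T^H(\varphi-\varphi_n)$ small in $L_2$ near $t=0$ — which works only because $H<1/2$ renders $t^{-1/2-H}$ integrable and the weight $t^{1/2-H}$ inside $K^H_T$ exactly absorbs the $O(n)$ size of $(f-f_n)'$ on the shrinking support $[1/n,2/n]$.
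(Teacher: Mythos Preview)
Your proposal is correct and follows essentially the same route as the paper: approximate $\varphi$ by $\varphi_n=t^{1/2-H}f_n$ with $f_n$ vanishing near the origin, use the cancellation $s^{H-1/2}(\varphi-\varphi_n)=f-f_n$ to obtain the simplified formula for $K_T^H(\varphi-\varphi_n)$, deduce $L_2$-convergence on the underlying-Wiener side, and pass to the limit via Helly--Bray on the Riemann--Stieltjes side. The paper merely asserts that a suitable sequence $f_n$ exists and that $\int_0^T|K_T^H(\varphi-\varphi_n)|^2\,dt\to0$; your explicit cutoff construction and the quantitative bound $|K_T^H(\varphi-\varphi_n)(t)|\le C$ on $[0,2/n]$ supply exactly the details the paper omits, and your observation that one is forced to take $f_n(0)=0$ (so $f_n\to f$ only on $(0,T]$) is a useful clarification of a point the statement glosses over.
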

Now we formulate  Girsanov theorem for fractional Brownian motions.
\begin{theo} Let $B^H=\{B^H(t), t\geq 0\}$ be a fractional Brownian motion,  and let the function $\varphi\in L_1([0,T])$ satisfy $(K^H_{0}\varphi)(\cdot)\in L_2([0,T]).$ Then the process $\widetilde{B}^H=\{\widetilde{B}^H_t, t\in[0,T]\}$ defined by
    \begin{equation*}
        \widetilde{B}^H_t= {B}^H_t-\int_0^t\varphi(s)ds
    \end{equation*}
        is a fractional Brownian motion w.r.t. probability measure $\mathbb{Q}_H$ given by
    \begin{equation*}
        \frac{d\mathbb{Q}_H}{d\mathbb{P}}=\exp\left\{\int_0^T(K^{H,*}_{0}\varphi)(t)dB(t)-1/2\int_0^T\left((K^{H,*}_{0}\varphi)(t)\right)^2 dt\right\},
    \end{equation*}
        where  $B =\{B(t), t\geq 0\}$ is the underlying Wiener process.
        \end{theo}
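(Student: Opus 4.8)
The plan is to reduce the statement to the classical Cameron--Martin--Girsanov theorem for the Wiener process by passing to the underlying Wiener process of Theorem~\ref{theo-girs}. Let $B=\{B(t),\,t\ge 0\}$ be the underlying Wiener process associated with $B^H$ as in Theorem~\ref{theo-girs}$(i)$, so that $B^H(t)=\int_0^t (K_T^H\1_{[0,t]})(s)\,dB(s)$, and put $h:=K_0^{H,*}\varphi$, which by hypothesis belongs to $L_2([0,T])$. Define $\widetilde B(t):=B(t)-\int_0^t h(s)\,ds$ and let $\mathbb{Q}_H$ be the measure with the density displayed in the statement, noting that $\int_0^T (K_0^{H,*}\varphi)(t)\,dB(t)=\int_0^T h(t)\,dB(t)$. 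I would then establish two facts: (a) $\widetilde B$ is a Wiener process under $\mathbb{Q}_H$; and (b) $\int_0^t(K_T^H\1_{[0,t]})(s)\,d\widetilde B(s)=\widetilde B^H(t)$ for all $t\in[0,T]$. Combining (a), (b) with Theorem~\ref{theo-girs}$(ii)$ applied to $\widetilde B$ then yields that $\widetilde B^H$ is a fractional Brownian motion under $\mathbb{Q}_H$.

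Fact (a) is immediate from the classical Girsanov theorem: since $h$ is deterministic and in $L_2([0,T])$, Novikov's condition holds trivially, as $\E\exp\left(\tfrac12\int_0^T h^2(t)\,dt\right)=\exp\left(\tfrac12\int_0^T h^2(t)\,dt\right)<\infty$, so the exponential martingale is a true martingale, $\mathbb{Q}_H$ is a probability measure equivalent to $\mathbb{P}$, and $\widetilde B$ is a $\mathbb{Q}_H$-Wiener process.

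The core of the proof is fact (b), which, by linearity of the Wiener integral and the identity $\int_0^t (K_T^H\1_{[0,t]})(s)\,dB(s)=B^H(t)$, amounts to the purely deterministic claim
\begin{equation*}
\int_0^t (K_T^H\1_{[0,t]})(s)\,(K_0^{H,*}\varphi)(s)\,ds=\int_0^t\varphi(s)\,ds,\qquad t\in[0,T].
\end{equation*}
This is precisely the statement that, under the Volterra kernel that turns $B$ into $B^H$, the drift $\int_0^\cdot h(s)\,ds$ of $B$ is mapped to the drift $\int_0^\cdot\varphi(s)\,ds$ of $B^H$. To prove it I would unwind the definitions in \eqref{operators}, expressing $(K_T^H\1_{[0,t]})$ and $(K_0^{H,*}\varphi)$ through the Riemann--Liouville operators $I_0^{1/2-H}$, $I_{T^-}^{H-1/2}$ of Definition~\ref{def:RLint} and the power weights $s^{1/2-H}$, $s^{H-1/2}$, and then use the fractional integration-by-parts formula $\int_0^T (I_0^\alpha u)(s)\,v(s)\,ds=\int_0^T u(s)\,(I_{T^-}^\alpha v)(s)\,ds$ together with the composition rule $I^\alpha_\pm \D^\alpha_\pm f=f$: the power weights cancel in pairs, and the cancellation is closed using $K_T^H K_T^{H,*}=\mathrm{id}$ from \eqref{inverse}. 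The hypothesis $(K_0^{H,*}\varphi)\in L_2([0,T])$ --- together with the Hardy--Littlewood bounds recorded in Remark~\ref{remark1} --- guarantees that all the intermediate fractional integrals are finite and that the interchanges of integration are justified. Once this identity is available, fact (b) reads $\int_0^t(K_T^H\1_{[0,t]})(s)\,d\widetilde B(s)=B^H(t)-\int_0^t(K_T^H\1_{[0,t]})(s)\,h(s)\,ds=B^H(t)-\int_0^t\varphi(s)\,ds=\widetilde B^H(t)$, as required.

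The main obstacle is this deterministic commutation identity: one must track the domains of the Riemann--Liouville operators and the boundary terms in the fractional integration by parts carefully, all the more so because $\varphi$ is only assumed to lie in $L_1([0,T])$ while its transform $K_0^{H,*}\varphi$ is required to land in $L_2([0,T])$. Everything else --- Novikov's condition, the classical Girsanov theorem, and the passage back through Theorem~\ref{theo-girs}$(ii)$ --- is routine assembly.
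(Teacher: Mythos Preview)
The paper does not give its own proof of this theorem. As announced at the start of Subsection~\ref{subsec-2.3}, all results there except Lemma~\ref{boudvar} are quoted from \cite{decreusefond1998fractional}, \cite{jost}, \cite{biagini2008stochastic} and \cite{mishura2008stochastic}; the Girsanov theorem for fBm is simply stated and then used. So there is no in-paper argument to compare against. Your outline --- pass to the underlying Wiener process $B$, apply the classical Girsanov theorem to obtain a $\mathbb{Q}_H$-Wiener process $\widetilde B$, and then feed $\widetilde B$ back through the Volterra representation of Theorem~\ref{theo-girs}$(ii)$ --- is exactly the standard route taken in those references, and it is correct in structure.

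Two small remarks. First, the hypothesis in the paper literally reads $(K^H_0\varphi)\in L_2([0,T])$, whereas you use $(K^{H,*}_0\varphi)\in L_2([0,T])$; since the density itself is written in terms of $K^{H,*}_0\varphi$ and the theorem is applied in Lemma~\ref{lemma-1} under precisely the condition supplied by Remark~\ref{remark1}, your reading is the one that makes the statement coherent, and the discrepancy is almost certainly a typo in the paper. Second, the identity \eqref{inverse} you invoke is $K_T^H K_T^{H,*}=\mathrm{id}$, which pairs the two \emph{right}-sided operators; the deterministic commutation you actually need pairs the Volterra kernel $(K_T^H\1_{[0,t]})$ with the \emph{left}-sided operator $K_0^{H,*}$. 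The computation still goes through via the fractional integration-by-parts formula and the semigroup/inverse relations for $I_0^\alpha$ and $I_{T^-}^\alpha$ that you mention, but \eqref{inverse} on its own is not the identity that closes it.
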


\section{Lower and upper bounds for small ball probability}\label{sec: small-ball}

Let $B^H$ be an fBm with Hurst index $H \in (0,1/2)$ and $W$ be a Wiener process independent of $B^H$, both defined on our filtered probability space $(\Omega,\F,\{\F_t\}_{t\geq0},\mathbb{P})$.

We consider a mixed Gaussian process composed of the fBm $B^H$ and the Wiener process $W$. Introduce the notation
  $A_{T,g,\varepsilon}=\{\myset\}$. Our goal is to study the asymptotics of the probability
\begin{equation}
	\mathbb{P}^\varepsilon_g \coloneqq \mathbb{P}(A_{T,g,\varepsilon})
	\label{eq:prob}
\end{equation}
as $\epsilon \rightarrow 0$. The class of functions  $g$ is described as follows:  $g \in AC([0,T])$, i.e., $g$ admits a representation $g (t) = \int_0^t g^\prime (s)~ds$, and we   assume additionally that    $g^\prime\in L_{2}([0,T])$. Also, let $f:[0,T] \rightarrow \mathbb{R}_+$ be any positive measurable function.   It should  be mentioned immediately that in the case when $g\equiv0$, we can apply tools of the "small ball" theory for Gaussian processes to study the probability
\begin{equation*}
	\mathbb{P}^\varepsilon_0 \coloneqq \mathbb{P}(A_{T,0,\varepsilon}):=\mathbb{P}(|B^H(t)+W(t)|\leq \varepsilon f(t), t\in[0,T]),
	 \end{equation*} (see, e.g., a detailed survey \cite{li-shao}). We shall discuss this question in detail in Section \ref{section-small}. Therefore our main goal is to ``annihilate'' the trend $g$.

\subsection{Girsanov theorem and trend ``annihilation''}
In order to remove  the trend, we split $g$ into two differentiable functions $g_W(t) = \int_0^t g_W^\prime (s)~ds$ and $g_B(t) = \int_0^t g_B^\prime (s)~ds$, satisfying
\begin{equation*}
	g(t) = g_W(t)+g_B(t),
\end{equation*}
The classes for $g_B^\prime$ and $g_W^\prime$ must   be chosen in order to be able to apply the standard Girsanov theorem to the trend   $g_W(t) =\int_0^t g_W^\prime (s)~ds$ and the Girsanov theorem for fBm to the trend $g_B(t) =\int_0^t g_B^\prime (s)~ds$. According to Remark \ref{remark1}, we need $  g_B^\prime\in  L_{2}([0,T])$, and so we introduce the following notation:
  the expansion
  \begin{equation}\label{expansion}g=g_W+g_ B\end{equation}
   is \textbf{suitable} if
 $g_W, g_B \in AC([0,T])$ and $g^\prime_W, g^\prime_B \in L_{2}([0,T])$.
Now we ``share'' the trend between the fBm and the Wiener process according to some suitable expansion of $g$, so that
\begin{equation}\label{expansion-1}
	\BH(t)+W(t)+g(t) = \left(\BH(t) + g_B(t)\right) + \left(W(t) + g_W(t)\right).
\end{equation}
Since the processes $B^H$ and   $W$ are independent, we apply \eqref{expansion-1} and Theorem \ref{theo-girs}, $(iii)$, to present  the following obvious statement.
\begin{lemma}\label{lemma-1} Let $g \in AC([0,T])$,   $g (t) = \int_0^t g^\prime (s)~ds$ with $g^\prime\in L_{2}([0,T])$.   For any suitable expansion \eqref{expansion},
define the probability measure $\mathbb{Q}$  by
\begin{align*}\frac{d\mathbb{Q}}{d\mathbb{ P}}
&=\frac{d\mathbb{Q}_{B^H}}{d\mathbb{P}}\cdot\frac{d\mathbb{Q}_W}{d\mathbb{P}}\\
&:=\exp\left\{-\int_0^T(K^{H,*}_{0}g_B^\prime)(t)dB(t)
-1/2\int_0^T((K^{H,*}_{0}g_B^\prime)(t))^2 dt\right\}\\ &\qquad\times\exp\left\{-\int_0^Tg'_W(t)dW(t)-\int_0^T(g'_W(t))^2d t\right\},\end{align*}
where $B$ is the underlying Wiener process for $B^H$.
Then $\widetilde{B}^H(t):={B}^H(t)+\int_0^tg^\prime_B(s)ds$ is an fBm, $\widetilde{W}(t):=W(t)+\int_0^tg^\prime_W(s)ds$ is an independent Wiener process and, consequently, the stochastic process $\BH(t)+W(t)+g(t)$ equals  a sum $\widetilde{B}^H(t)+\widetilde{W}(t)$ of an independent fBm and a Wiener process, w.r.t. the measure $\mathbb{Q}$.
\end{lemma}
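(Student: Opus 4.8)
The plan is to realise $\mathbb{Q}$ as a product change of measure, handling the two independent components separately and then recombining them. First I would apply the Girsanov theorem for fractional Brownian motion stated above to $B^H$ with drift density $-g'_B$: since the expansion is suitable we have $g'_B\in L_2([0,T])\subset L_1([0,T])$, and by Remark \ref{remark1} the transformed integrand $(K^{H,*}_{0}g'_B)$ again belongs to $L_2([0,T])$, so the integrability hypotheses of that theorem hold and $d\mathbb{Q}_{B^H}/d\mathbb{P}$ is a bona fide density (its $\mathbb{P}$-expectation equals $1$, because the exponent $\int_0^T(K^{H,*}_{0}g'_B)(t)\,dB(t)-\tfrac12\int_0^T((K^{H,*}_{0}g'_B)(t))^2\,dt$ has deterministic quadratic variation, so Novikov's condition is trivial). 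The theorem then yields that $\widetilde{B}^H(t)=B^H(t)+\int_0^tg'_B(s)\,ds$ is a fractional Brownian motion under $\mathbb{Q}_{B^H}$. Symmetrically, the standard Girsanov theorem applied to the Wiener process $W$ with drift $-g'_W\in L_2([0,T])$ shows that $d\mathbb{Q}_W/d\mathbb{P}$ is a density (Novikov again trivial, as $\int_0^T(g'_W(t))^2\,dt$ is a finite constant) and that $\widetilde{W}(t)=W(t)+\int_0^tg'_W(s)\,ds$ is a Wiener process under $\mathbb{Q}_W$.

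Next I would glue these together. The point is that $d\mathbb{Q}_{B^H}/d\mathbb{P}$ is $\sigma(B)=\sigma(B^H)$-measurable while $d\mathbb{Q}_W/d\mathbb{P}$ is $\sigma(W)$-measurable, and these $\sigma$-algebras are $\mathbb{P}$-independent. Hence $\E_{\mathbb{P}}[d\mathbb{Q}/d\mathbb{P}]=\E_{\mathbb{P}}[d\mathbb{Q}_{B^H}/d\mathbb{P}]\,\E_{\mathbb{P}}[d\mathbb{Q}_W/d\mathbb{P}]=1$, so $\mathbb{Q}$ is a probability measure, and for any bounded measurable functionals $\Phi$ of the first component and $\Psi$ of the second, $\frac{d\mathbb{Q}_{B^H}}{d\mathbb{P}}\Phi(\widetilde{B}^H)$ is $\sigma(B)$-measurable and $\frac{d\mathbb{Q}_W}{d\mathbb{P}}\Psi(\widetilde{W})$ is $\sigma(W)$-measurable, so the expectation factorises and $\E_{\mathbb{Q}}[\Phi(\widetilde{B}^H)\Psi(\widetilde{W})]=\E_{\mathbb{Q}_{B^H}}[\Phi(\widetilde{B}^H)]\,\E_{\mathbb{Q}_W}[\Psi(\widetilde{W})]$. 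By the two single-component statements the right-hand side equals the integral of $\Phi$ against the law of an fBm times the integral of $\Psi$ against the law of a Wiener process; since $\Phi$ and $\Psi$ are arbitrary, under $\mathbb{Q}$ the pair $(\widetilde{B}^H,\widetilde{W})$ is distributed as an fBm and an independent Wiener process.

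Finally, adding the two shifted processes gives, by \eqref{expansion}, $\widetilde{B}^H(t)+\widetilde{W}(t)=B^H(t)+W(t)+\int_0^t(g'_B(s)+g'_W(s))\,ds=B^H(t)+W(t)+g(t)$, which is the claimed representation. The only genuine obstacle is the middle step — verifying that the product density simultaneously implements both changes of measure and preserves independence — and this is exactly where the independence of $W$ and $B^H$ enters; everything else is a direct invocation of the two Girsanov theorems together with the integrability bookkeeping already packaged into the notion of a suitable expansion and into Remark \ref{remark1}.
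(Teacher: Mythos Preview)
Your argument is correct and matches the paper's approach: the paper actually presents this lemma as an ``obvious statement'' following directly from the independence of $B^H$ and $W$ together with the two Girsanov theorems, without writing out a proof. What you have done is spell out the details the paper omits---checking the integrability via Remark~\ref{remark1}, verifying that the product density has unit expectation, and using the $\sigma(B)$/$\sigma(W)$ factorisation to see that independence survives the change of measure---so there is no discrepancy in method, only in level of detail.
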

With the help of Lemma \ref{lemma-1} we can rewrite the probability under consideration as follows.
\begin{lemma}\label{lemma-2}  Probability $\mathbb{P}^\varepsilon_g$ can be written as
\begin{equation}\begin{gathered}\label{eq:prob_cm}
	 \mathbb{P}^\varepsilon_g=
  \mathbb{E}\bigg(\1_{A_{T,0,\varepsilon}}
	 \exp\bigg\{\int_0^T g_W^\prime(t) dW(t)  - \frac12 \int_0^T (g_W^\prime(t))^2 dt\\  + \int_0^T h(t) dB(t) - \frac12 \int_0^T (h(t))^2 dt\bigg\}\bigg)  = \exp\left\{- \frac12 \int_0^T (g_W^\prime(t))^2 + (h(t))^2 dt\right\}\\
		  \times\left\{\mathbb{E}\left[\1_{ A_{T,0,\varepsilon}}
		\left(\exp\left\{\int_0^T g_W^\prime(t) dW(t) + \int_0^T h(t) dB(t) \right\}-1\right)\right]
		 + \mathbb{P}^\varepsilon_0\right\}.
\end{gathered}\end{equation}	
 Here $h(t) = (K^{H,*}_{0}g_B^\prime)(t)\in L^2([0,T])$ and   $B$ is the underlying Wiener process for $B^H$ consequently a Wiener process independent of $W$.\end{lemma}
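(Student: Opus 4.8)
The plan is to apply Lemma~\ref{lemma-1} directly and then expand the resulting expectation algebraically. First I would start from $\mathbb{P}^\varepsilon_g = \mathbb{P}(A_{T,g,\varepsilon})$ and rewrite this probability as an expectation under $\mathbb{Q}$, using that $d\mathbb{P}/d\mathbb{Q} = (d\mathbb{Q}/d\mathbb{P})^{-1}$. Concretely, $\mathbb{P}^\varepsilon_g = \mathbb{E}^{\mathbb{P}}[\1_{A_{T,g,\varepsilon}}] = \mathbb{E}^{\mathbb{Q}}[\1_{A_{T,g,\varepsilon}} (d\mathbb{P}/d\mathbb{Q})]$. By Lemma~\ref{lemma-1}, under $\mathbb{Q}$ the process $\BH(t)+W(t)+g(t)$ coincides with $\widetilde{B}^H(t)+\widetilde{W}(t)$, a sum of an independent fBm and Wiener process, so the event $A_{T,g,\varepsilon}$ becomes $\{|\widetilde{B}^H(t)+\widetilde{W}(t)| \le \varepsilon f(t),\ 0 \le t \le T\}$, which under $\mathbb{Q}$ has exactly the structure of $A_{T,0,\varepsilon}$ under $\mathbb{P}$. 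Inverting the Radon--Nikodym derivative from Lemma~\ref{lemma-1} and substituting $\widetilde{B}^H = B^H + \int_0^\cdot g_B'$, $\widetilde{W} = W + \int_0^\cdot g_W'$ (equivalently, rewriting the stochastic integrals w.r.t. the $\mathbb{Q}$-Brownian motions back in terms of $B$ and $W$ plus drift corrections), one obtains the first line of \eqref{eq:prob_cm}, where $h(t) = (K^{H,*}_{0}g_B^\prime)(t)$; the fact that $h \in L^2([0,T])$ is exactly Remark~\ref{remark1} applied to $g_B' \in L_2([0,T])$.

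Next I would perform the purely algebraic rearrangement giving the second representation in \eqref{eq:prob_cm}. Factor out the deterministic term $\exp\{-\tfrac12 \int_0^T (g_W'(t))^2 + (h(t))^2\, dt\}$ from the exponent, leaving inside the expectation $\1_{A_{T,0,\varepsilon}} \exp\{\int_0^T g_W'(t)\, dW(t) + \int_0^T h(t)\, dB(t)\}$. Then write $\exp\{\cdots\} = (\exp\{\cdots\} - 1) + 1$, so that the expectation splits as $\mathbb{E}[\1_{A_{T,0,\varepsilon}}(\exp\{\int_0^T g_W'\,dW + \int_0^T h\,dB\} - 1)] + \mathbb{E}[\1_{A_{T,0,\varepsilon}}] = \mathbb{E}[\1_{A_{T,0,\varepsilon}}(\exp\{\cdots\}-1)] + \mathbb{P}^\varepsilon_0$. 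Collecting everything gives precisely the displayed right-hand side of \eqref{eq:prob_cm}.

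There is no serious obstacle here — the statement is, as the authors say, essentially a bookkeeping consequence of Girsanov's theorem. The one point deserving care is the change-of-measure manipulation of the stochastic integrals: when passing from $\mathbb{P}$ to $\mathbb{Q}$ and expressing $d\mathbb{P}/d\mathbb{Q}$, one must check that the sign conventions in the exponent of $d\mathbb{Q}/d\mathbb{P}$ (which carries $-\int_0^T (K^{H,*}_0 g_B')\, dB$ and $-\int_0^T g_W'\, dW$) invert correctly and that, after substituting $\widetilde{B} = B + \int_0^\cdot (K^{H,*}_0 g_B')$ for the $\mathbb{Q}$-underlying Wiener process, the cross terms combine to leave exactly $\exp\{\int_0^T g_W'\, dW + \int_0^T h\, dB - \tfrac12\int_0^T (g_W')^2 + h^2\, dt\}$ with no residual drift. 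A quick check that $g_W'$ and $h$ are in $L^2([0,T])$ guarantees the stochastic integrals and their exponential moments are well defined, so the splitting $\exp\{X\} = (\exp\{X\}-1)+1$ inside the expectation is legitimate and the proof is complete.
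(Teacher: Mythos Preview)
Your proposal is correct and follows essentially the same route as the paper: start from $\mathbb{P}(A_{T,g,\varepsilon})=\mathbb{E}_{\mathbb{Q}}[\1_{A_{T,g,\varepsilon}}\,d\mathbb{P}/d\mathbb{Q}]$, recognize the event under $\mathbb{Q}$ as $\{|\widetilde B^H+\widetilde W|\le\varepsilon f\}$, rewrite $d\mathbb{P}/d\mathbb{Q}$ in terms of the $\mathbb{Q}$-Brownian motions $\widetilde W,\widetilde B$ (absorbing the drift corrections so that the $+\tfrac12$ terms become $-\tfrac12$), and then use equality in law to replace the $\mathbb{Q}$-expectation by a $\mathbb{P}$-expectation over $A_{T,0,\varepsilon}$. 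The final algebraic split $\exp\{X\}=(\exp\{X\}-1)+1$ is exactly what the paper does as well.
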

\begin{proof} Denote $\widetilde{W}(t)=W(t)+\int_0^tg^\prime_W(s)ds$ and $\widetilde{B}^H_t={B}^H_t+\int_0^tg^\prime_B(s)ds$. Using Lemma \ref{lemma-1}, we can write
 \begin{equation*}\begin{gathered}
	 \mathbb{P}^\varepsilon_g:=\mathbb{P}(A_{T,g,\varepsilon})  = \mathbb{E}_{\mathbb{Q}}\left(\1_{A_{T,g,\varepsilon}}\frac{d\mathbb{P}}{d\mathbb{ Q}}\right)
 \\=\mathbb{E}_{\mathbb{Q}}\left(\1_{\{|\widetilde{B}^H(t)+\widetilde{W}(t)| \leq \epsilon f(t), ~0 \leq t \leq T\}}\exp\left\{\int_0^T g_W^\prime(t) dW(t)  + \frac12 \int_0^T (g_W^\prime(t))^2 dt\right.\right.\\
	 \left.\left. + \int_0^T h(t) dB(t) + \frac12 \int_0^T (h(t))^2 dt\right\}\right)\\
	 =\mathbb{E}_{\mathbb{Q}}\left(\1_{\{|\widetilde{B}^H(t)+\widetilde{W}(t)| \leq \epsilon f(t), ~0 \leq t \leq T\}}\exp\left\{\int_0^T g_W^\prime(t) d\widetilde{W}(t)  - \frac12 \int_0^T (g_W^\prime(t))^2 dt \right.\right. \\
	 \left.\left. + \int_0^T h(t) d\widetilde{B}(t)  - \frac12 \int_0^T (h(t))^2 dt\right\}\right) \\
  =\mathbb{E}\left(\1_{A_{T,0,\varepsilon}}
  \exp\left\{\int_0^T g_W^\prime(t) dW(t)  - \frac12 \int_0^T (g_W^\prime(t))^2 dt  \right.\right.\\
  \left.\left. + \int_0^T h(t) dB(t) - \frac12 \int_0^T (h(t))^2 dt\right\}\right)\\ = \exp\left\{- \frac12 \int_0^T (g_W^\prime(t))^2 + (h(t))^2 dt\right\}\\
		 \times\Big\{\mathbb{E}\left[\1_{\{A_{T,0,\varepsilon}\}}
		\left(\exp\left\{\int_0^T g_W^\prime(t) dW(t) + \int_0^T h(t) dB(t) \right\}-1\right)\right]
	 + \mathbb{P}\left(A_{T,0,\varepsilon}\right)
		\Big\}.
\end{gathered}\end{equation*}
\end{proof}

\subsection{Lower bound as a function of  sharing   the trend between $W$ and $B^H$}
In this section, our goal is to apply  Lemma \ref{lemma-2} in order to get a lower bound for  $\mathbb{P}^\varepsilon_g$  that we then maximize among suitable expansions \eqref{expansion} of $g$. We recall that the expansion \eqref{expansion} is suitable if
 $g_W, g_B \in AC([0,T])$ and $g^\prime_W, g^\prime_B \in L_{2}([0,T])$.
\begin{lemma}\label{lem:lower_bound}
	Let $B^H$ be an fBm with Hurst index $H \in (0,1/2)$,  $W$ be a Wiener process independent of $B^H$ and $g \in AC([0,T])$,   with $g^\prime\in L_{2}([0,T])$.  Then  for any $\epsilon >0$ and any suitable expansion \eqref{expansion}  we have that
	\begin{align}\label{lower}
		&\mathbb{P}^\varepsilon_g=\mathbb{P}(A_{T,g,\varepsilon})  \geq \exp\left\{- \frac12 \int_0^T \left((g_W^\prime(t))^2 + (h(t))^2\right) dt\right\} \mathbb{P}\left(A_{T,0,\varepsilon}\right)\nonumber\\
		&\qquad =\exp\left\{- \frac12 \int_0^T \left((g_W^\prime(t))^2 + (h(t))^2\right) dt\right\} \mathbb{P}^\varepsilon_0,
	\end{align}
 where $h(t) = (K^{H,*}_{0}g_B^\prime)(t)$.
\end{lemma}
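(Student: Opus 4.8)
The identity in Lemma~\ref{lemma-2} already expresses $\mathbb{P}^\varepsilon_g$ as
\[
\exp\left\{-\tfrac12\int_0^T\left((g_W^\prime(t))^2+(h(t))^2\right)dt\right\}
\left\{\mathbb{E}\left[\1_{A_{T,0,\varepsilon}}\left(\exp\left\{\int_0^T g_W^\prime(t)\,dW(t)+\int_0^T h(t)\,dB(t)\right\}-1\right)\right]+\mathbb{P}^\varepsilon_0\right\},
\]
so the whole task reduces to showing that the expectation term in braces is nonnegative, i.e.
\[
\mathbb{E}\left[\1_{A_{T,0,\varepsilon}}\exp\left\{\int_0^T g_W^\prime(t)\,dW(t)+\int_0^T h(t)\,dB(t)\right\}\right]\ \geq\ \mathbb{E}\left[\1_{A_{T,0,\varepsilon}}\right]=\mathbb{P}^\varepsilon_0.
\]
The plan is to obtain this from Jensen's inequality applied to the conditional expectation given the event $A_{T,0,\varepsilon}$ (equivalently, given the $\sigma$-field generated by $B^H+W$, or more precisely after conditioning on the indicator). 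Writing $Z=\int_0^T g_W^\prime(t)\,dW(t)+\int_0^T h(t)\,dB(t)$, the left-hand side equals $\mathbb{P}^\varepsilon_0\cdot\mathbb{E}\!\left[e^{Z}\mid A_{T,0,\varepsilon}\right]$, and by Jensen $\mathbb{E}\!\left[e^{Z}\mid A_{T,0,\varepsilon}\right]\geq\exp\!\left(\mathbb{E}\!\left[Z\mid A_{T,0,\varepsilon}\right]\right)$, so it suffices to prove $\mathbb{E}\!\left[Z\mid A_{T,0,\varepsilon}\right]=0$.

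The key step is therefore the symmetry argument: $Z$ is a centered Gaussian stochastic integral, hence $-Z$ has the same law, and more importantly the pair $(B^H+W,\,Z)$ has the same joint distribution as $(B^H+W,\,-Z)$ because $B^H+W$ is (jointly) Gaussian with $Z$ and the whole Gaussian vector process $(B,W)$ — on which both $B^H+W$ and $Z$ are built — is symmetric under the sign flip $(B,W)\mapsto(-B,-W)$, which fixes $|B^H+W|$ (hence the event $A_{T,0,\varepsilon}$) while sending $Z$ to $-Z$. Consequently $\mathbb{E}[Z\,\1_{A_{T,0,\varepsilon}}]=\mathbb{E}[-Z\,\1_{A_{T,0,\varepsilon}}]$, which forces $\mathbb{E}[Z\,\1_{A_{T,0,\varepsilon}}]=0$, i.e. $\mathbb{E}[Z\mid A_{T,0,\varepsilon}]=0$ on the event of positive probability (if $\mathbb{P}^\varepsilon_0=0$ the claimed bound is trivial). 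Plugging back, $\mathbb{E}[\1_{A_{T,0,\varepsilon}}e^{Z}]\geq\mathbb{P}^\varepsilon_0\cdot e^{0}=\mathbb{P}^\varepsilon_0$, so the term in braces in Lemma~\ref{lemma-2} is at least $2\mathbb{P}^\varepsilon_0-\mathbb{P}^\varepsilon_0=\mathbb{P}^\varepsilon_0$, and the displayed lower bound \eqref{lower} follows; the final equality in \eqref{lower} is just the definition $\mathbb{P}^\varepsilon_0=\mathbb{P}(A_{T,0,\varepsilon})$.

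The main obstacle — really the only subtle point — is making the symmetry argument rigorous: one must check that $B^H+W$ and $Z$ are measurable functions of the same underlying Gaussian noise in a way that is manifestly equivariant under the sign flip. By Theorem~\ref{theo-girs}, $B^H=\int_0^\cdot(K_T^H\1_{[0,\cdot]})\,dB$ is a linear functional of the underlying Wiener process $B$, and $h(t)=(K^{H,*}_0 g_B^\prime)(t)\in L^2([0,T])$ by Remark~\ref{remark1}, so $\int_0^T h\,dB$ and $\int_0^T g_W^\prime\,dW$ are well-defined linear (hence odd) functionals of $(B,W)$; meanwhile $W$ is linear in $W$ and $B^H$ is linear in $B$, so $|B^H+W|$ is even under $(B,W)\mapsto(-B,-W)$ and $A_{T,0,\varepsilon}$ is invariant. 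Since $(B,W)$ and $(-B,-W)$ have the same law (both are $(\R^2$-valued) Wiener processes), the equality of the two expectations follows. Alternatively, and perhaps more cleanly for the write-up, one can avoid conditioning altogether: expand the exponential's expectation directly as $\mathbb{E}[\1_{A_{T,0,\varepsilon}}e^Z]$, use $e^Z\geq 1+Z$ pointwise, and conclude $\mathbb{E}[\1_{A_{T,0,\varepsilon}}e^Z]\geq\mathbb{P}^\varepsilon_0+\mathbb{E}[\1_{A_{T,0,\varepsilon}}Z]=\mathbb{P}^\varepsilon_0$ by the same symmetry argument — this is the shortest route and sidesteps any conditional-Jensen technicalities.
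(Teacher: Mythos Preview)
Your proposal is correct, and the ``shortest route'' you sketch at the end --- apply $e^Z-1\geq Z$ pointwise and then use the sign-flip $(B,W)\mapsto(-B,-W)$ to conclude $\mathbb{E}[\1_{A_{T,0,\varepsilon}}Z]=0$ --- is exactly the paper's proof (which credits the symmetry observation to Novikov). The conditional-Jensen route you lead with is a valid minor variant resting on the identical key step; just note that the braced term in Lemma~\ref{lemma-2} already simplifies to $\mathbb{E}[\1_{A_{T,0,\varepsilon}}e^Z]$, so your ``$2\mathbb{P}^\varepsilon_0-\mathbb{P}^\varepsilon_0$'' bookkeeping is unnecessary.
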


\begin{proof}
	To obtain a lower bound \eqref{lower} for \eqref{eq:prob}, we take the right-hand side of relations  \eqref{eq:prob_cm}
	 and  apply  inequality $e^x -1 \geq x$ to get the inequality
	\begin{align*}
		\mathbb{P}^\varepsilon _g &\geq \exp\left\{- \frac12 \int_0^T \left((g_W^\prime(t))^2 + (h(t))^2\right) dt\right\}\\
		&\qquad \times\left\{\mathbb{E}\left[\1_{ A_{T,0,\varepsilon} } \left(\int_0^T g_W^\prime(t)~ dW(t) + \int_0^T h(t)~ dB(t)\right)\right]
		  + \mathbb{P}\left(A_{T,0,\varepsilon}\right)
		\right\}\\
		& = \exp\left\{- \frac12 \int_0^T \left((g_W^\prime(t))^2 + (h(t))^2\right) dt\right\}\mathbb{ P}\left(A_{T,0,\varepsilon}\right).
	\end{align*}
	To get the last equality, we follow the idea from \cite{novikov} to use the fact that
	\begin{equation*}
		\mathbb{E}\left[\1_{A_{T,0,\varepsilon}} \left(\int_0^T g_W^\prime(t)~ dW(t) + \int_0^T h(t)~ dB(t)\right)\right] = 0.
	\end{equation*}
	Indeed, the expectation is taken on a centrally symmetric set and $-W(t)$ and $-B(t)$ are also Brownian motions.
\end{proof}

\subsubsection{Equation for the maximizer of the lower bound in terms of fractional integrals} In order to tighten the lower bound, we search for the function $g_B  = g -g_W $ that maximizes $\exp\left\{- \frac12 \int_0^T \left((g_W^\prime(t))^2 + (h(t))^2\right) dt\right\}$. Thus, we want to solve the following minimization problem among suitable expansions \eqref{expansion} of $g$:
\begin{equation}
	\min_{g^\prime_B \in L_2([0,T])} \int_0^T \left((g_W^\prime(t))^2 + (h(t))^2\right) dt.
	\label{eq:minimization}
\end{equation}
  That is, we consider  $g^\prime_W(t) = g^\prime(t)-g^\prime_B(t)$,  $h(t) = (K^{H,*}_{0}g_B^\prime)(t)$, and all components $g^\prime, g^\prime_W, g^\prime_B, h$ should be in $L^2([0,T])$. The following proposition presents a necessary condition for $g_B $ to be the minimizer.

\begin{lemma}\label{prop:necessary_min}
	Suppose that $g_B$ minimizes \eqref{eq:minimization} among suitable expansions \eqref{expansion} of $g$. Then $g^\prime_B$ satisfies the following equation
	\begin{equation}
		C_1^{-2} t^{1/2-H}\left(I_{T^-}^{1/2-H} \left(\cdot^{2H-1}  I_{0}^{1/2-H} (\cdot^{1/2-H}g^\prime_B)\right)\right)(t)  +   g^\prime_B(t) = g^\prime(t),
		\label{eq:min_condition2}
	\end{equation}
	for all $t \in [0,T]$.
\end{lemma}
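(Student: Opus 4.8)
The plan is to treat \eqref{eq:minimization} as an unconstrained minimization of a convex quadratic functional over $g_B^\prime \in L_2([0,T])$ and to derive the Euler–Lagrange equation by a first-variation (perturbation) argument. Write $J(g_B^\prime) = \int_0^T \big((g^\prime(t)-g_B^\prime(t))^2 + (K^{H,*}_{0}g_B^\prime)^2(t)\big)\,dt$, using $g_W^\prime = g^\prime - g_B^\prime$. Since $K^{H,*}_{0}$ is a bounded linear operator on $L_2([0,T])$ by Remark \ref{remark1}, the map $g_B^\prime \mapsto J(g_B^\prime)$ is a well-defined, strictly convex (in fact $J$ is coercive because of the $(g^\prime-g_B^\prime)^2$ term) quadratic functional, so a minimizer, if it exists, is characterized by the vanishing of the Gâteaux derivative in every direction $\eta \in L_2([0,T])$.

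First I would compute, for $\eta \in L_2([0,T])$ and $\lambda \in \R$,
\begin{equation*}
\frac{d}{d\lambda}\bigg|_{\lambda=0} J(g_B^\prime + \lambda\eta)
= -2\int_0^T (g^\prime - g_B^\prime)(t)\,\eta(t)\,dt + 2\int_0^T (K^{H,*}_{0}g_B^\prime)(t)\,(K^{H,*}_{0}\eta)(t)\,dt.
\end{equation*}
Setting this to zero for all $\eta$ gives, after moving the first term,
\begin{equation*}
\int_0^T (K^{H,*}_{0}g_B^\prime)(t)\,(K^{H,*}_{0}\eta)(t)\,dt = \int_0^T (g^\prime - g_B^\prime)(t)\,\eta(t)\,dt .
\end{equation*}
The key analytic step is to rewrite the left-hand side as $\int_0^T \big((K^{H,*}_{0})^{\ast}K^{H,*}_{0}g_B^\prime\big)(t)\,\eta(t)\,dt$, where $(K^{H,*}_{0})^{\ast}$ is the $L_2$-adjoint; since $\eta$ is arbitrary in $L_2([0,T])$ this yields the pointwise (a.e.) identity
\begin{equation*}
\big((K^{H,*}_{0})^{\ast}K^{H,*}_{0}g_B^\prime\big)(t) + g_B^\prime(t) = g^\prime(t).
\end{equation*}
It then remains to identify the operator $(K^{H,*}_{0})^{\ast}K^{H,*}_{0}$ explicitly in terms of Riemann–Liouville fractional integrals. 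Recalling from \eqref{operators} that $(K^{H,*}_{0}f)(t) = C_1^{-1} t^{H-1/2}\big(I_0^{1/2-H}(\cdot^{1/2-H}f)\big)(t)$, and using the classical fractional integration-by-parts formula $\int_0^T (I_0^{\alpha}\phi)(t)\,\psi(t)\,dt = \int_0^T \phi(t)(I_{T^-}^{\alpha}\psi)(t)\,dt$ (valid under the integrability conditions guaranteed here by the Hardy–Littlewood bounds of Remark \ref{remark1}), a direct computation gives the adjoint $(K^{H,*}_{0})^{\ast}$ as $C_1^{-1}\,\cdot^{1/2-H} I_{T^-}^{1/2-H}(\cdot^{H-1/2}\,\cdot)$, so that
\begin{equation*}
\big((K^{H,*}_{0})^{\ast}K^{H,*}_{0}g_B^\prime\big)(t)
= C_1^{-2}\, t^{1/2-H}\Big(I_{T^-}^{1/2-H}\big(\cdot^{\,2H-1}\, I_0^{1/2-H}(\cdot^{1/2-H}g_B^\prime)\big)\Big)(t),
\end{equation*}
which is exactly equation \eqref{eq:min_condition2}.

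The main obstacle I anticipate is the bookkeeping in the adjoint computation: one must carefully track the three nested weights $\cdot^{H-1/2}$, $\cdot^{1/2-H}$ and the composition of $I_0^{1/2-H}$ with $I_{T^-}^{1/2-H}$, and justify every application of Fubini/fractional integration by parts, which requires checking that all the functions involved lie in the right $L_p$ spaces — this is precisely what Remark \ref{remark1} was set up to supply, since $g_B^\prime \in L_2 \subset L_{1/(1-H)}$ and the Hardy–Littlewood theorem puts $K^{H,*}_{0}g_B^\prime$ back in $L_2$. A secondary subtlety is that this lemma only asserts a \emph{necessary} condition, so I need not address existence/uniqueness of the minimizer here (that is handled separately via the Fredholm alternative, the operator $(K^{H,*}_{0})^{\ast}K^{H,*}_{0}$ being compact and nonnegative, so that $I + (K^{H,*}_{0})^{\ast}K^{H,*}_{0}$ is boundedly invertible); it suffices to exhibit the vanishing first variation and translate it into \eqref{eq:min_condition2}.
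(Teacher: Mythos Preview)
Your proposal is correct and follows essentially the same route as the paper: a first-variation (perturbation) argument on the quadratic functional, followed by fractional integration by parts to move $I_0^{1/2-H}$ off the test function $\eta$, yielding \eqref{eq:min_condition2} since $\eta$ is arbitrary. The only cosmetic difference is that the paper phrases the computation via an explicit Lagrangian $L(t,x,y)=-2xg'(t)+x^2+C_1^{-2}t^{2H-1}y^2$ in the style of \cite{almeida2009calculus}, whereas you package the same step as identifying the $L_2$-adjoint $(K_0^{H,*})^\ast$; the underlying calculation is identical.
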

\begin{remark} Obviously, equation \eqref{eq:min_condition2} can be rewritten in the following equivalent form
\begin{equation}\label{eq:min_condition2-1}K_T^{H,*}\left(K_0^{H,*}(g^\prime_B)\right)(t)+g^\prime_B(t)=g^\prime(t),\; t\in[0,T].\end{equation}

\end{remark}

\subsubsection{Equation for the maximizer as a Fredholm integral  equation and its properties}
Let us present equation  \eqref{eq:min_condition2} in a more appropriate form. In this mindset, denote ${C_2=\frac{C_1^{-2}}{\left(\Gamma(1/2-H)\right)^2}}$ and let, for brevity, $x=g_B^\prime$. Then, using  Definition \ref{def:RLint} of the fractional integrals,   the first term on the left-hand side of \eqref{eq:min_condition2} can be rewritten as
	\begin{align}
		& C_2
		t^{1/2-H}\int_t^T (u-t)^{-1/2-H} u^{2H-1}
		\int_0^u (u-z)^{-1/2-H} z^{1/2-H}x(z) ~dz ~du\nonumber\\
		\begin{split}
		 & \qquad = C_2
		t^{1/2-H}\left(\int_0^t x(z)z^{1/2-H} \int_t^T (u-t)^{-1/2-H} u^{2H-1} (u-z)^{-1/2-H}~du~dz\right.\\
	 &\qquad\left.  +t^{1/2-H}\int_t^T x(z)z^{1/2-H} \int_z^T (u-t)^{-1/2-H} u^{2H-1} (u-z)^{-1/2-H}~du~dz\right).
	 \label{eq-int-1}
	 \end{split}
	\end{align}
Next, taking into account representation \eqref{eq-int-1}, we can define the integral kernel $\kappa(z,t)$ as follows
	\begin{align}
	\begin{split}
		\kappa(z,t)&=(tz)^{1/2-H}\int_t^T (u-t)^{-1/2-H} u^{2H-1} (u-z)^{-1/2-H}~du\1_{\{0 \leq z < t\leq T\}}\\
		& \qquad+ (tz)^{1/2-H}\int_z^T (u-t)^{-1/2-H} u^{2H-1} (u-z)^{-1/2-H}~du\1_{\{0\leq t <z \leq T\}},
			 \label{kernel}
	\end{split}
	\end{align}
With the help of this kernel,  \eqref{eq:min_condition2} is reduced to a Fredholm integral equation
	\begin{equation}\label{Fredholm}
	x(t) + C_2 \int_0^T  {\kappa}(z,t) x(z) ~ dz
	= g^\prime(t), \;t\in[0,T].
	\end{equation}
Now our goal is to study the properties of the kernel $\kappa$ and to establish existence and uniqueness of the solution of Equation 	 \eqref{Fredholm} in the different classes of functions, depending on the value of $H$.
\begin{lemma}\label{properties}
	The kernel $\kappa$ described in \eqref{kernel} has the following properties
	\begin{itemize}
		\item[$(i)$] $\kappa$ is non-negative and symmetric.
		\item[$(ii)$] $\kappa$ is a polar kernel, more precisely, $\kappa (z,t)=\frac{\kappa_0 (z,t)}{|t-z|^{2H}},$ where the function $\kappa_0\in C([0,T]^2)$.
		\item[$(iii)$] \begin{itemize}
			\item[$(a)$] There exists a constant $C>0$ such that for any $1\leq r<\frac{1  }{2H}$
			$$\sup_{t\in[0,T]}\int_0^T \kappa(z,t)^rdz\leq C.$$
			\item[$(b)$] For any $1\leq r<\frac{1  }{2H}$, $\kappa$ belongs to  $L_r([0,T]^2)$.
		\end{itemize}
		\item[$(iv)$] $\kappa$ is a non-negative  definite kernel; more precisely, for any $x\in L_2([0,T])$
		the value $$\int_0^T\int_0^T {\kappa}(z,t)x(z)x(t)~dz~dt$$ is well-defined and nonnegative.
	\end{itemize}
\end{lemma}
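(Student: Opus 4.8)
\medskip
\noindent\emph{Proof plan.}

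The plan is to work throughout with the single symmetric representation
\begin{equation*}
\kappa(z,t)=(zt)^{1/2-H}\int_{z\vee t}^{T}(u-z)^{-1/2-H}(u-t)^{-1/2-H}\,u^{2H-1}\,du,\qquad z\neq t,
\end{equation*}
obtained from \eqref{kernel} by observing that the inner integral runs over $[t,T]=[z\vee t,T]$ on $\{z<t\}$ and over $[z,T]=[z\vee t,T]$ on $\{t<z\}$. From this form part $(i)$ is immediate: the integrand is a product of non-negative factors, so $\kappa\ge0$, and the expression is manifestly invariant under $z\leftrightarrow t$ because $z\vee t$ and $(u-z)^{-1/2-H}(u-t)^{-1/2-H}$ are symmetric.

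For $(ii)$ I would fix $z\neq t$, set $a=z\wedge t$ and $b=z\vee t$ (so $|t-z|=b-a$), and substitute $u=b+(b-a)s$ in the integral. Since $u-a=(b-a)(1+s)$ and $u-b=(b-a)s$, the powers of $b-a$ combine into a clean front factor $(b-a)^{-2H}$, leaving $\kappa(z,t)=\kappa_0(z,t)\,|t-z|^{-2H}$ with
\begin{equation*}
\kappa_0(z,t)=(zt)^{1/2-H}\int_0^{(T-b)/(b-a)}(1+s)^{-1/2-H}\,s^{-1/2-H}\,\bigl(b+(b-a)s\bigr)^{2H-1}\,ds .
\end{equation*}
It then remains to show that $\kappa_0$ extends to a continuous function on $[0,T]^2$. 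Away from the diagonal I would argue by dominated convergence: for $s\le1$ the integrand is locally dominated by $C\,s^{-1/2-H}$, and for $s\ge1$ the inverse substitution turns the tail into $(b-a)^{2H}\int_{2b-a}^{T}(v-a)^{-1/2-H}(v-b)^{-1/2-H}v^{2H-1}\,dv$, which has no singular integrand and is continuous; on the diagonal one lets $b-a\to0$ and identifies the limit with a Beta-type integral. Controlling the joint behaviour of the prefactor $(zt)^{1/2-H}$, the moving upper limit $(T-b)/(b-a)$ and the singularity of the integrand as $(z,t)$ approaches the diagonal is the most delicate point of the lemma, and the one I would carry out in full detail only in the Appendix.

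With $(ii)$ in hand, $(iii)$ is short. First I would record the uniform bound $\kappa(z,t)\le C|t-z|^{-2H}$ (equivalently, $\kappa_0$ is bounded), which also drops out directly from the integral representation by splitting $\int_{z\vee t}^{T}$ at $2(z\vee t)-(z\wedge t)$ and using $u^{2H-1}\le(z\vee t)^{2H-1}$ together with $(z\wedge t)^{1/2-H}(z\vee t)^{H-1/2}\le1$. Then, for $1\le r<1/(2H)$,
\begin{equation*}
\sup_{t\in[0,T]}\int_0^T\kappa(z,t)^r\,dz\le C^{r}\sup_{t\in[0,T]}\int_0^T|t-z|^{-2Hr}\,dz\le C^{r}\,\frac{2T^{1-2Hr}}{1-2Hr}<\infty ,
\end{equation*}
since $2Hr<1$; this is $(iii)(a)$, and integrating this bound once more in $t$ gives $\kappa\in L_r([0,T]^2)$, i.e.\ $(iii)(b)$.

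Finally, for $(iv)$ I would use that $\kappa$ is, up to the positive constant $C_2$, the kernel of $K^{H,*}_0$ composed with itself in the $L_2$ sense. Writing out $(K^{H,*}_0 x)(t)=C_1^{-1}\Gamma(1/2-H)^{-1}\,t^{H-1/2}\int_0^t(t-z)^{-1/2-H}z^{1/2-H}x(z)\,dz$, squaring, integrating over $t\in[0,T]$ and interchanging the order of integration (legitimate by the $L_r$ estimates of $(iii)$ and Cauchy--Schwarz, since $\int_0^T\!\int_0^T|t-z|^{-2H}|x(z)||x(t)|\,dz\,dt\le\frac{2T^{1-2H}}{1-2H}\|x\|_{L_2}^2$), one recognises, using the representation at the top of this plan,
\begin{equation*}
\int_0^T\bigl((K^{H,*}_0 x)(t)\bigr)^2\,dt=C_2\int_0^T\!\int_0^T\kappa(z,t)\,x(z)\,x(t)\,dz\,dt .
\end{equation*}
Since $K^{H,*}_0$ maps $L_2([0,T])$ into $L_2([0,T])$ by Remark \ref{remark1}, the left-hand side is a finite non-negative number, so dividing by $C_2>0$ shows the double integral in $(iv)$ is well-defined and non-negative.
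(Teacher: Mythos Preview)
Your proof is correct and follows essentially the same route as the paper: the same change of variable $u=(z\vee t)+|t-z|\,s$ for $(ii)$, the polar bound for $(iii)$, and the identification of the quadratic form with $C_2^{-1}\|K_0^{H,*}x\|_{L_2}^2$ via Fubini for $(iv)$. The only minor differences are that you work with the coarser bound $\kappa(z,t)\le C|t-z|^{-2H}$ in $(iii)$ and in the well-definedness part of $(iv)$ (the paper retains the extra factor $(z\wedge t)^{1/2-H}(z\vee t)^{H-1/2}$ and carries out a Beta computation), and that you actually address the continuity of $\kappa_0$ on the diagonal, which the paper's proof only asserts.
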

\begin{remark}\label{main rem} \begin{itemize}
\item[$(i)$] Taking into account  $(iii)(a)$, we can apply   Theorem \ref{kant-aki-1} from the Appendix with any $r=\sigma<1/2H$ and conclude that the integral operator $Ax(s)=C_2\int_0^T\kappa(s,t)x(t)dt$ is a linear continuous operator from $L_p([0,T])$ into $L_q([0,T])$ for any $q\geq p>1$, $q\geq r$, $r\geq \frac{pq}{pq-q+p}$. In particular, we can let $p=q$. In this case $\frac{pq}{pq-q+p}=1<\frac{1}{2H}$. Therefore  we can put $r=1$ and then for any $ p>1$ the integral operator $A$ is a linear continuous operator from $L_p([0,T])$ into $L_p([0,T])$.
		\item[$(ii)$] Furthermore, let us apply Theorem \ref{kant-aki-2} with any $1<r=\sigma<1/2H$ and try to consider $p=q$. In this case, the inequality $\left(1-\frac{\sigma}{q}\right)\frac{p}{p-1}< r$ becomes $\left(1-\frac{r}{p}\right)\frac{p}{p-1}=\frac{p-r}{p-1}< r$, which is obviously true. Therefore, if we take any  $p=q>1$, and choose $ r=\sigma<(p\wedge \frac1{2H})$, then the conditions of Theorem \ref{kant-aki-2} will hold and we get that the operator $A$ will be compact from $L_p([0,T])$ into $L_p([0,T])$. Of course, it follows from the symmetry of the kernel $\kappa$ that the operator $A$ is self-adjoint, therefore its adjoint operator $A^*=A$ is compact from $L_p([0,T])$ into $L_p([0,T])$ for any $p>1$ as well.
		
	\end{itemize}
\end{remark}

\begin{cor} Summarizing the properties of operator $A$ from Lemma \ref{properties}, we see that $L_2([0,T])$ is the natural class in which $A$ is compact and non-negative definite. \end{cor}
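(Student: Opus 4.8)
The plan is to obtain the statement directly from Lemma \ref{properties} and Remark \ref{main rem}, specialized to the exponents $p=q=2$: nothing genuinely new is proved, one only checks that the parameter ranges in the cited results are nonempty for this choice, which is exactly where $H<1/2$ enters. For compactness I would invoke Remark \ref{main rem}$(ii)$ (i.e.\ Theorem \ref{kant-aki-2}) with $p=q=2$; one needs $\sigma=r$ with $1<r<2\wedge\frac{1}{2H}$, and such an $r$ exists precisely because $H<1/2$ makes $\frac{1}{2H}>1$. For this $r$ the condition required by Theorem \ref{kant-aki-2} reads $\bigl(1-\tfrac{r}{2}\bigr)\cdot2=2-r<r$, which holds since $r>1$; hence $A\colon L_2([0,T])\to L_2([0,T])$ is compact.

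Non-negative definiteness is simply Lemma \ref{properties}$(iv)$ together with $C_2>0$: for every $x\in L_2([0,T])$ the quantity $\langle Ax,x\rangle_{L_2([0,T])}=C_2\int_0^T\int_0^T\kappa(z,t)x(z)x(t)\,dz\,dt$ is well-defined and nonnegative. Moreover, the symmetry of $\kappa$ from Lemma \ref{properties}$(i)$ shows that $A$ is self-adjoint on $L_2([0,T])$, so that $A=A^*$ and the compactness just established makes $A$ a compact, non-negative, self-adjoint operator on the Hilbert space $L_2([0,T])$.

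Finally, to justify the word ``natural'' I would contrast this with the $L_p$-scale: by Remark \ref{main rem}$(i)$--$(ii)$ the operator $A$ is continuous, and even compact, on every $L_p([0,T])$ with $p>1$, but only on $L_2([0,T])$ are the inner-product structure, self-adjointness, and the quadratic-form positivity of $(iv)$ simultaneously available, so that the spectral theorem for compact self-adjoint operators yields an orthonormal eigenbasis with nonnegative eigenvalues, which is the form in which $A$ will be used. It is also worth noting, for context, that by Lemma \ref{properties}$(iii)(b)$ one has $\kappa\in L_2([0,T]^2)$ only when $H<1/4$, so $A$ is Hilbert--Schmidt for small $H$, whereas for $1/4\le H<1/2$ the compactness genuinely rests on the sharper Theorem \ref{kant-aki-2}. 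There is no serious obstacle in the argument; the only point requiring a little care is verifying, as above, that the admissible exponent interval in Theorem \ref{kant-aki-2} is nonempty when $p=q=2$.
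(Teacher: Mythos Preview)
Your proposal is correct and follows exactly the line the paper intends: the corollary in the paper is stated without proof as a direct summary of Lemma \ref{properties} and Remark \ref{main rem}, and your argument simply makes explicit the specialization $p=q=2$ in Remark \ref{main rem}$(ii)$ together with Lemma \ref{properties}$(iv)$. The additional remarks you make (self-adjointness from symmetry, the Hilbert--Schmidt threshold at $H=1/4$, and the rationale for the word ``natural'') go a bit beyond what the paper spells out but are accurate and consistent with its framework.
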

The next principal result is an immediate consequence of Lemma \ref{properties}, Remark \ref{main rem} and the Fredholm alternative.
\begin{theo}\label{prop:solution3.5}
	Let $H<1/2$, $g^\prime\in L_2([0,T])$. Then there exists a unique function $x=x(t), t \in [0,T], x\in L_2([0,T])$ that satisfies \eqref {Fredholm},   for all $t \in [0,T]$. Consequently, the same is true for equivalent equations   \eqref{eq:min_condition2} and \eqref{eq:min_condition2-1}.
\end{theo}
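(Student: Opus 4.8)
The plan is to apply the Fredholm alternative for compact operators on a Hilbert space, using $L_2([0,T])$ as the ambient space. Rewrite \eqref{Fredholm} as $(I+A)x = g'$, where $A$ is the integral operator with kernel $C_2\kappa(z,t)$. By Remark \ref{main rem}$(ii)$ with $p=q=2$ (and any $1<r=\sigma<1/(2H)$, which exists since $H<1/2$), the operator $A$ is compact and self-adjoint on $L_2([0,T])$; by Lemma \ref{properties}$(iv)$ it is non-negative definite. Hence $\langle (I+A)x,x\rangle = \|x\|^2 + \langle Ax,x\rangle \geq \|x\|^2$, so $I+A$ is injective on $L_2([0,T])$. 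The Fredholm alternative for $I+A$ with $A$ compact then yields that $I+A$ is also surjective with bounded inverse, so for the given right-hand side $g'\in L_2([0,T])$ there is a unique solution $x\in L_2([0,T])$ of \eqref{Fredholm}.

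The remaining point is that this solution of \eqref{Fredholm} is genuinely a solution of \eqref{eq:min_condition2} (equivalently \eqref{eq:min_condition2-1}) \emph{pointwise} for all $t\in[0,T]$, not merely almost everywhere, and that it produces a bona fide suitable expansion. For this one first notes that equations \eqref{eq:min_condition2}, \eqref{eq:min_condition2-1} and \eqref{Fredholm} were shown to be equivalent by the computation leading from \eqref{eq-int-1} to \eqref{kernel}; the reduction only used Fubini-type rearrangements that are justified once $x\in L_2([0,T])$ by the absolute-integrability bounds implicit in Lemma \ref{properties}$(iii)$. Setting $g_B'=x$ and $g_W'=g'-x$, both lie in $L_2([0,T])$, so $g_B,g_W\in AC([0,T])$ and the expansion is suitable in the sense of \eqref{expansion}; moreover $h=K_0^{H,*}g_B'\in L_2([0,T])$ by Remark \ref{remark1}, so all the quantities entering the minimization \eqref{eq:minimization} are finite. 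Finally, uniqueness in $L_2([0,T])$ transfers to \eqref{eq:min_condition2} and \eqref{eq:min_condition2-1} because those are the same equation written through the continuous operators $K_0^{H,*}$ and $K_T^{H,*}$.

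The main obstacle is establishing the hypotheses needed to invoke the Fredholm alternative cleanly, i.e.\ the compactness and the mapping properties of $A$ on $L_2([0,T])$; but that work has already been isolated in Lemma \ref{properties} and Remark \ref{main rem}, whose proofs rely on the Kantorovich--Akilov-type criteria (Theorems \ref{kant-aki-1} and \ref{kant-aki-2}) deferred to the Appendix. Given those, the theorem follows in a few lines: non-negative definiteness $\Rightarrow$ injectivity of $I+A$, compactness $\Rightarrow$ (via the Fredholm alternative) bijectivity of $I+A$, hence existence and uniqueness of $x\in L_2([0,T])$, and then the equivalences established earlier carry the conclusion to \eqref{eq:min_condition2} and \eqref{eq:min_condition2-1}.
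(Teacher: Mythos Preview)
Your proposal is correct and follows essentially the same route as the paper: rewrite \eqref{Fredholm} as $(I+A)x=g'$, use Lemma \ref{properties} and Remark \ref{main rem} to get that $A$ is compact, self-adjoint and non-negative definite on $L_2([0,T])$, deduce that $I+A$ has trivial kernel, and invoke the Fredholm alternative. The paper phrases the injectivity step via the spectrum (non-negative eigenvalues imply $-1$ is not an eigenvalue, hence $\operatorname{Ker}(A+I)=\{0\}$), whereas you use the equivalent quadratic-form inequality $\langle (I+A)x,x\rangle\geq\|x\|^2$; these are interchangeable.
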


It follows that  there exists a candidate minimizer $g^\prime_B(t)$ to \eqref{eq:minimization} which equals  the unique solution of
\eqref {Fredholm}. Moreover, it follows from Remark \ref{remark1} that this function creates an admissible trend in the sense that for $\varphi=g^\prime_B$, Girsanov theorem (Theorem \ref{theo-girs}, $(iii)$) holds.

Next, we verify that the candidate minimizer $g^\prime_B(t)$ satisfying \eqref{eq:min_condition2} is in fact the solution to \eqref{eq:minimization}.

\begin{theo}\label{prop:sufficient}
	Let   $x  $ be the unique $L_2([0,T])$-solution of  \eqref{eq:min_condition2}. Then $x$ is the unique solution to \eqref{eq:minimization}.
\end{theo}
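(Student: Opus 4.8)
The plan is to treat \eqref{eq:minimization} as a strictly convex quadratic minimization problem on the Hilbert space $L_2([0,T])$. Such a problem has a unique minimizer, and its stationarity (Euler--Lagrange) equation is precisely \eqref{eq:min_condition2}; since we already know from Theorem \ref{prop:solution3.5} that \eqref{eq:min_condition2} has a unique $L_2([0,T])$-solution, that solution must be the minimizer. First I would record the reduction. By Remark \ref{remark1} the operator $B := K^{H,*}_{0}$ is bounded on $L_2([0,T])$, so $h = Bg'_B \in L_2([0,T])$ for every $g'_B \in L_2([0,T])$; and since $g'\in L_2([0,T])$, any $g'_B\in L_2([0,T])$ yields a suitable expansion \eqref{expansion} via $g'_W := g'-g'_B$, and conversely. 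Writing $x := g'_B$, problem \eqref{eq:minimization} becomes the \emph{unconstrained} minimization over $x\in L_2([0,T])$ of
$$
J(x) \;:=\; \|g'-x\|_{L_2([0,T])}^2 + \|Bx\|_{L_2([0,T])}^2 \;=\; \langle (I+B^*B)x,\,x\rangle - 2\langle g',\,x\rangle + \|g'\|_{L_2([0,T])}^2 .
$$

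Next I would prove that $J$ has a unique minimizer by a standard Hilbert-space argument. The operator $S := I + B^*B$ is bounded and self-adjoint and satisfies $\langle Sx,x\rangle = \|x\|^2 + \|Bx\|^2 \ge \|x\|^2$, so it is positive definite and boundedly invertible, and $[x,y] := \langle Sx,y\rangle$ is an inner product on $L_2([0,T])$ equivalent to the usual one. Setting $x^\ast := S^{-1}g'$ and completing the square gives $J(x) = [x-x^\ast,\,x-x^\ast] + \bigl(\|g'\|^2 - [x^\ast,x^\ast]\bigr)$, and $[x-x^\ast,x-x^\ast] \ge \|x-x^\ast\|^2 \ge 0$ with equality only for $x=x^\ast$. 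Hence $J$ is strictly convex and coercive and attains its global minimum at the single point $x^\ast$, which is characterized by the stationarity condition $Sx^\ast = g'$, i.e.\ $x^\ast + B^*Bx^\ast = g'$.

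Finally I would identify this equation with \eqref{eq:min_condition2} and conclude. Since $x^\ast$ minimizes \eqref{eq:minimization}, Lemma \ref{prop:necessary_min} shows that $x^\ast$ solves \eqref{eq:min_condition2}; alternatively, one checks directly, using the standard adjointness of the left- and right-sided Riemann--Liouville fractional integrals of order $1/2-H$ together with the weights in \eqref{operators}, that $B^* = K^{H,*}_{T}$, so that $x^\ast + B^*Bx^\ast = g'$ is exactly \eqref{eq:min_condition2-1}. By Theorem \ref{prop:solution3.5} equation \eqref{eq:min_condition2} has a unique $L_2([0,T])$-solution $x$; therefore $x = x^\ast$, and $x$ is the unique minimizer of \eqref{eq:minimization}.

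I do not expect a genuinely hard step. The two points that need a little care are that the feasible set of suitable expansions really is all of $L_2([0,T])$ (which uses $g'\in L_2$ and the $L_2$-boundedness of $K^{H,*}_{0}$ from Remark \ref{remark1}) and that $J$ is coercive and strictly convex so that the minimization argument applies; both follow at once from the elementary bound $\langle (I+B^*B)x,x\rangle \ge \|x\|^2$. If one wishes to avoid invoking Lemma \ref{prop:necessary_min}, the single computational point is the identity $B^* = K^{H,*}_{T}$, which is exactly the manipulation already used to pass from \eqref{eq:min_condition2} to the Fredholm form \eqref{Fredholm}.
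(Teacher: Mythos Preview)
Your proof is correct. Both your argument and the paper's rest on convexity, but the implementations differ. The paper verifies a pointwise convexity inequality for the Lagrangian $L(t,x,y)=-2xg'(t)+x^2+C_1^{-2}t^{2H-1}y^2$ (namely $L(t,x+x_1,y+y_1)-L(t,x,y)\ge \partial_1L\,x_1+\partial_2L\,y_1$, which reduces to $x_1^2+2C_1^{-2}t^{2H-1}y_1^2\ge 0$) and then invokes the abstract sufficient-condition theorem from the fractional calculus of variations (Theorem~\ref{thm:thm2almeida}). You instead recast the problem as an unconstrained quadratic on the Hilbert space $L_2([0,T])$, observe that $J(x)=\langle (I+B^*B)x,x\rangle-2\langle g',x\rangle+\|g'\|^2$ with $B=K_0^{H,*}$ bounded, and use the elementary fact that $I+B^*B$ is coercive to complete the square and read off the unique minimizer as $S^{-1}g'$. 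Your route is more self-contained: it does not rely on Theorem~\ref{thm:thm2almeida}, it yields existence and uniqueness of the minimizer directly (rather than first solving the Euler--Lagrange equation and then upgrading), and the identification $B^*=K_T^{H,*}$ makes transparent why the stationarity condition coincides with \eqref{eq:min_condition2-1}. The paper's approach, on the other hand, keeps the argument within the variational framework already set up for Lemma~\ref{prop:necessary_min}, so no extra operator-theoretic language is needed.
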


\begin{proof}
	From Theorem \ref{thm:thm2almeida} (see Appendix \ref{app:A}), the candidate minimizer obtained in Theorem  \ref{prop:solution3.5} minimizes \eqref{eq:minimization} if $L(t,x,y)$, given by \eqref{eq:Lfct} satisfies
	\begin{align*}
	L(t,x+x_1,y+y_1) - L(t,x,y) \geq \partial_1 L(t,x,y)x_1 + \partial_1 L(t,x,y)y_1
	\end{align*}
	for all $(t,x,y)$, $(t,x+x_1,y+y_1)$ in $[0,T]\times \R^2$, where $\partial_1$ ($\partial_2$) stand, respectively, for the differentiation in $x$ (in $y$).
	Using \eqref{eq:Lfct}, the condition is equivalent to
	\begin{equation*}
	x_1^2 + 2C^{-2}_1 t^{2H-1} y_1^2 \geq 0,
	\end{equation*}
	which is satisfied for any $(x_1,y_1) \in \R^2$.
\end{proof}
Finally, taking into account Lemma \ref{lem:lower_bound} and Theorems \ref{prop:solution3.5} and \ref{prop:sufficient}, we get the following main result  concerning  the lower bound.
\begin{theo}\label{lower-all} Let $B^H$ be an fBm with Hurst index $H \in (0,1/2)$,  $W$ be a Wiener process independent of $B^H$ and $g \in AC([0,T])$,   with $g^\prime\in L_{2}([0,T])$.  Then  for any $\epsilon >0$ and any suitable expansion \eqref{expansion}  we have that
	\begin{align}\label{lower-max}
		& \mathbb{P}(A_{T,g,\varepsilon})  \geq \exp\left\{- \frac12 \int_0^T \left((g_W^\prime(t))^2 + (h(t))^2\right) dt\right\} \mathbb{P}\left(A_{T,0,\varepsilon}\right),
	\end{align}
 where $h(t) = (K^{H,*}_{0}g_B^\prime)(t)$, and $g_B^\prime(t)$  is the unique $L^2([0,T])$-solution of Equation \eqref{Fredholm}. Lower bound \eqref{lower-max} is optimal in the sense that $$\frac12 \int_0^T \left((\widetilde{g}_W^\prime(t))^2 + (\widetilde{h}(t))^2\right) dt\geq \frac12 \int_0^T \left((g_W^\prime(t))^2 + (h(t))^2\right) dt$$ for any other suitable expansion $g^\prime=\widetilde{g}_W^\prime+\widetilde{g}_B^\prime.$
\end{theo}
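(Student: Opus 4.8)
The plan is to assemble the statement from results already in hand, since the analytic work --- the kernel estimates and the Fredholm alternative behind Lemma~\ref{properties} and Theorem~\ref{prop:solution3.5}, and the convexity argument of Theorem~\ref{prop:sufficient} --- is already done. The inequality \eqref{lower-max} for an \emph{arbitrary} suitable expansion \eqref{expansion} is literally the content of Lemma~\ref{lem:lower_bound}, so the only thing that must still be established is the optimality assertion, and that will follow by reading Theorems~\ref{prop:solution3.5} and~\ref{prop:sufficient} through the dictionary between expansions and square-integrable functions.

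First I would check that the distinguished choice produces a genuinely suitable expansion. Let $g_B^\prime$ be the unique $L^2([0,T])$-solution of \eqref{Fredholm} supplied by Theorem~\ref{prop:solution3.5}, and set $g_B(t)=\int_0^t g_B^\prime(s)\,ds$, so $g_B\in AC([0,T])$. Since $g^\prime\in L^2([0,T])$ by hypothesis and $g_B^\prime\in L^2([0,T])$, the function $g_W^\prime:=g^\prime-g_B^\prime$ lies in $L^2([0,T])$, hence $g_W:=g-g_B=\int_0^\cdot g_W^\prime(s)\,ds\in AC([0,T])$; thus $g=g_W+g_B$ is suitable. By Remark~\ref{remark1}, $h=(K^{H,*}_{0}g_B^\prime)\in L^2([0,T])$, so Lemma~\ref{lem:lower_bound} applies to this expansion and yields \eqref{lower-max}.

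Next I would identify the exponent with the objective functional of \eqref{eq:minimization}. Suitable expansions of $g$ are in one-to-one correspondence with functions $g_B^\prime\in L^2([0,T])$, via $g_W^\prime=g^\prime-g_B^\prime$ and $h=K^{H,*}_{0}g_B^\prime$; the constraints $g_W^\prime\in L^2$ and $h\in L^2$ are automatic (the latter by Remark~\ref{remark1}), so the minimization \eqref{eq:minimization} genuinely ranges over all suitable expansions. Under this correspondence $\tfrac12\int_0^T\big((g_W^\prime)^2+h^2\big)\,dt$ is one half of the functional in \eqref{eq:minimization}. By Theorems~\ref{prop:solution3.5} and~\ref{prop:sufficient}, the $L^2$-solution $g_B^\prime$ of \eqref{eq:min_condition2} --- equivalently of \eqref{Fredholm} --- is the unique minimizer of \eqref{eq:minimization}. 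Hence for any other suitable expansion $g^\prime=\widetilde g_W^\prime+\widetilde g_B^\prime$, with $\widetilde h=K^{H,*}_{0}\widetilde g_B^\prime$, one has $\tfrac12\int_0^T\big((\widetilde g_W^\prime)^2+\widetilde h^2\big)\,dt\geq\tfrac12\int_0^T\big((g_W^\prime)^2+h^2\big)\,dt$, which is precisely the optimality claim; combined with \eqref{lower-max} this finishes the argument.

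As for difficulty, this final step carries essentially no obstacle --- it is bookkeeping on top of the earlier theorems. The one point worth stating carefully is exactly the bijection between suitable expansions and $L^2$ functions $g_B^\prime$, together with the observation that $h\in L^2$ comes for free from Remark~\ref{remark1}; without that, one could worry that the class over which \eqref{eq:minimization} is minimized is strictly smaller than the class of all suitable expansions, and the optimality of the Fredholm solution among \emph{all} suitable expansions would not be immediate.
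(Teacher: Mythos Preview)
Your proposal is correct and follows exactly the paper's approach: the paper simply states that Theorem~\ref{lower-all} is an immediate consequence of Lemma~\ref{lem:lower_bound} together with Theorems~\ref{prop:solution3.5} and~\ref{prop:sufficient}. Your write-up is in fact more careful than the paper's, as you explicitly verify that the Fredholm solution gives a suitable expansion and spell out the bijection between suitable expansions and $L^2$ choices of $g_B^\prime$.
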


\subsection{Upper bound}

In this section, we use Lemma \ref{lemma-2} to obtain an upper bound for the probability $\mathbb{P}^\varepsilon_g=\mathbb{P}(A_{T,g,\varepsilon}),$  in terms of a particular suitable expansion \eqref{expansion} of $g$. We recall that an expansion \eqref{expansion} is called suitable if $g_W, g_B \in AC([0,T])$ and $g^\prime_W,g^\prime_B \in L_2([0,T])$.

The next auxiliary  result will be used for obtaining an upper bound.

\begin{lemma}\label{prop:upper_bound}
	Let $B^H$ be an fBm with Hurst index $H \in (0,1/2)$,  $W$ be a Wiener process independent of $B^H$ and $g \in AC([0,T])$,   with $g^\prime\in L_{2}([0,T])$. Let also $h(t) = (K^{H,*}_{0} g^\prime_B)(t)$. Then there exists a sequence of real numbers $C_n,n\geq 1$, and vanishing sequence $c_n,n\geq 1$ such that $c_n\rightarrow 0$ as $n\rightarrow 0$ such that for any $\epsilon >0$, we have the sequence of inequalities
	\begin{equation}\begin{gathered}\label{auxil-upper}
		 \mathbb{P}(A_{T,g,\varepsilon})
		   \leq \exp\left\{- \frac12 \int_0^T \left((g_W^\prime(t))^2 + (h(t))^2\right) dt \right\}\mathbb{P}\left(A_{T,0,\varepsilon}\right)\\+\mathbb{P} (A_{T,0,\varepsilon})(\exp\{\varepsilon C_n+c_n\}-1)
,\end{gathered}\end{equation}
	where   $g_W(t) = g(t) - g_B(t)$, and   $g^\prime_B$ is the unique solution of Equation \eqref{eq:min_condition2-1}
	\begin{equation*}
		g^\prime(t)-g^\prime_B(t) = (K^{H,*}_{T} h)(t)=\left(K^{H,*}_{T}\left(K^{H,*}_{0} g^\prime_B\right)\right)(t).
	\end{equation*}
\end{lemma}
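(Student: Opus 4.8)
The plan is to start from the exact identity for $\mathbb{P}^\varepsilon_g$ in Lemma \ref{lemma-2}, namely
\[
\mathbb{P}^\varepsilon_g = \exp\left\{-\tfrac12\int_0^T\big((g_W^\prime(t))^2+(h(t))^2\big)dt\right\}\left\{\mathbb{E}\left[\1_{A_{T,0,\varepsilon}}\left(e^{\int_0^T g_W^\prime dW+\int_0^T h\,dB}-1\right)\right]+\mathbb{P}^\varepsilon_0\right\},
\]
and to bound the middle expectation from above. On the event $A_{T,0,\varepsilon}$ we have $|B^H(t)+W(t)|\le\varepsilon f(t)$ for all $t\in[0,T]$, so the idea is to integrate $\int_0^T g_W^\prime\,dW+\int_0^T h\,dB$ by parts to express it in terms of $W(\cdot)$ and $B^H(\cdot)$ (more precisely, in terms of the mixed process $B^H+W$ itself), so that the smallness $\varepsilon f$ can be exploited. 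The Wiener integral $\int_0^T g_W^\prime\,dW$ is handled by ordinary integration by parts once $g_W^\prime$ is replaced by a smooth approximant; the fractional part $\int_0^T h\,dB = \int_0^T g_B^\prime\,dB^H$ is exactly the object Lemma \ref{boudvar} and Corollary \ref{coroldiffer} were set up to handle: for $g_B^\prime\in C^{(1)}$ (or of the form $t^{1/2-H}\times C^{(1)}$) it equals $g_B^\prime(T)B^H(T)-\int_0^T B^H(t)\,dg_B^\prime(t)$.

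Concretely, I would fix an approximating sequence: since $g^\prime\in L_2([0,T])$ and $g_B^\prime$ (the solution of \eqref{Fredholm}) is in $L_2([0,T])$, both $g_B^\prime$ and $g_W^\prime=g^\prime-g_B^\prime$ can be approximated in $L_2$ by functions that are smooth enough for integration by parts; call the approximants $g_{B,n}^\prime$ and $g_{W,n}^\prime$, and let $c_n$ measure the resulting $L_2$-errors in the stochastic integrals (so $c_n\to0$ by the isometry, both for $dW$ and, via the operator $K_T^H$ and the estimate \eqref{up-up}, for $dB^H$). After integration by parts, $\int_0^T g_{W,n}^\prime\,dW+\int_0^T g_{B,n}^\prime\,dB^H$ becomes a boundary term at $T$ plus $-\int_0^T W(t)\,dg_{W,n}^\prime(t)-\int_0^T B^H(t)\,dg_{B,n}^\prime(t)$. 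The boundary term is $g_{W,n}^\prime(T)W(T)+g_{B,n}^\prime(T)B^H(T)$; on $A_{T,0,\varepsilon}$ this is not yet controlled because $W(T)$ and $B^H(T)$ are only controlled through their \emph{sum}. This is where one uses that $g_B^\prime$ solves \eqref{eq:min_condition2-1}, equivalently $g^\prime_W = K_T^{H,*}h$ with $h=K_0^{H,*}g_B^\prime$: the maximizing split is precisely the one for which the two integrands combine so that, up to the smoothing error, the linear functional $\int_0^T g_W^\prime\,dW+\int_0^T h\,dB$ depends on $(W,B^H)$ only through $B^H+W$. Thus after integration by parts and using the equation for $g_B^\prime$, the expression is, modulo $c_n$, of the form $\int_0^T (B^H(t)+W(t))\,d\mu_n(t)$ for a signed measure $\mu_n$ of bounded variation, whence on $A_{T,0,\varepsilon}$ it is bounded by $\varepsilon\int_0^T f(t)\,|d\mu_n|(t)=:\varepsilon C_n$. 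Then $e^{x}-1$ with $|x|\le \varepsilon C_n+c_n$ (the $c_n$ absorbing all approximation errors) gives $\mathbb{E}[\1_{A_{T,0,\varepsilon}}(e^{\int g_W^\prime dW+\int h\,dB}-1)]\le \mathbb{P}^\varepsilon_0(e^{\varepsilon C_n+c_n}-1)$, and substituting back into the identity yields \eqref{auxil-upper}.

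The main obstacle is the bookkeeping of the two integration-by-parts steps together with the smoothing: one must show that the \emph{same} suitable expansion $g=g_W+g_B$ that maximizes the lower bound also makes the boundary and drift terms collapse onto the process $B^H+W$, i.e. that applying $K_T^{H,*}$ to $h=K_0^{H,*}g_B^\prime$ really produces $g_W^\prime$ in a way compatible with integration by parts against $B^H$ and $W$ respectively; this is exactly the content of the equation $g^\prime - g_B^\prime = K_T^{H,*}(K_0^{H,*}g_B^\prime)$ quoted in the statement, and verifying it at the level of the approximants $g_{B,n}^\prime$ (using Corollary \ref{coroldiffer} to pass the operator $K_T^H$ through the smoothing) is the delicate point. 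The secondary technical point is checking that the measures $d g_{W,n}^\prime$, $d g_{B,n}^\prime$ (after incorporating the weights $t^{1/2-H}$ from the operators) have bounded variation uniformly enough that $C_n$ is finite for each $n$, while the errors go into $c_n\to0$; the already-established bound \eqref{up-up} and Corollary \ref{coroldiffer} supply precisely what is needed. Everything else is routine: the centrally-symmetric-set argument is not needed here (that was for the lower bound), and the final inequality is just monotonicity of $x\mapsto e^x-1$ on the range $|x|\le\varepsilon C_n+c_n$.
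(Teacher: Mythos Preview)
Your overall strategy---integration by parts on $A_{T,0,\varepsilon}$ after smoothing---is right, but two concrete points are off and they are exactly where the paper's proof does the work.

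First, the identity $\int_0^T h\,dB = \int_0^T g_B^\prime\,dB^H$ is false. By the very definition of the Wiener integral with respect to $B^H$ one has $\int_0^T \varphi\,dB^H = \int_0^T (K_T^H\varphi)\,dB$, so $\int_0^T h\,dB = \int_0^T (K_T^{H,*}h)\,dB^H$. The point of using the \emph{optimal} split is that $K_T^{H,*}h = K_T^{H,*}K_0^{H,*}g_B^\prime = g^\prime - g_B^\prime = g_W^\prime$, whence
\[
\int_0^T g_W^\prime\,dW + \int_0^T h\,dB \;=\; \int_0^T g_W^\prime\,dW + \int_0^T g_W^\prime\,dB^H,
\]
i.e.\ the \emph{same} integrand against both processes. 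After approximating $h$ by smooth $h_n$ and setting $g_{W,n}^\prime:=K_T^{H,*}h_n$, Corollary~\ref{coroldiffer} gives integration by parts for $\int g_{W,n}^\prime\,dB^H$, and the two pieces combine \emph{trivially} into $-(W+B^H)\cdot dg_{W,n}^\prime$ plus a boundary term in $W(T)+B^H(T)$. Your route---integrating $g_{W,n}^\prime$ against $dW$ and a \emph{different} $g_{B,n}^\prime$ against $dB^H$---produces $-\int W\,dg_{W,n}^\prime-\int B^H\,dg_{B,n}^\prime$, which does not depend on $(W,B^H)$ only through $W+B^H$ unless the two measures coincide; the Fredholm equation does not force that, and the ``collapse'' you invoke is really the identity $\int h\,dB=\int g_W^\prime\,dB^H$ in disguise.

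Second, you cannot absorb the smoothing remainder into a \emph{deterministic} $c_n$ on the event $A_{T,0,\varepsilon}$: the difference $\int_0^T(g_W^\prime-g_{W,n}^\prime)\,dW + \int_0^T(h-h_n)\,dB$ is a mean-zero Gaussian that is small in $L_2$, not pointwise. The paper handles this by a second application of Girsanov (Lemma~\ref{lemma-1}) to the remainder, which converts it into the deterministic factor $\exp\{c_n\}$ with
\[
c_n=\tfrac12\int_0^T(g_W^\prime-g_{W,n}^\prime)^2\,dt+\tfrac12\int_0^T(h-h_n)^2\,dt,
\]
times a \emph{shifted} small-ball probability $\mathbb P\big(|B^H+W+\Delta_n|\le\varepsilon f\big)$; Anderson's inequality then bounds this by $\mathbb P(A_{T,0,\varepsilon})$. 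That step is essential and is missing from your sketch; the final line ``monotonicity of $x\mapsto e^x-1$ on $|x|\le\varepsilon C_n+c_n$'' does not substitute for it.
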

\begin{proof}
	Let $g_B\in L^2([0,T])$ satisfy Equation \eqref{eq:min_condition2-1} which can be rewritten as
	\begin{equation}\label{once again}
		g^\prime_W(t) = g^\prime(t)-g^\prime_B(t) =(K^{H,*}_{T} h)(t)= K^{H,*}_{T}\left(K^{H,*}_{0} g^\prime_B\right)(t).
		 \end{equation}
	By Theorem \ref{prop:solution3.5}, Equation \eqref{once again} has a unique solution  $g^\prime_B\in L^2([0,T])$. Furthermore, the expansion ${g_B = \int_0^\cdot g^\prime_B(t) dt}$, ${g_W = g - g_B}$ is suitable.
	
	Now, according to Remark \ref{remark1}, $h  = (K^{H,*}_{0}g_B^\prime) $ is in $L_2([0,T])$.  Using the definition of the integral w.r.t. an fBm from subsection \ref{subsec-2.3} with $f= K^{H,*}_T h\in K_H([0, T])$, we obtain
	\begin{equation}\label{eq:h-Weiner-int}
		\int_0^T (K^{H,*}_T h)(t)~ dB^H(t)= \int_0^T h(t) ~dB(t),
	\end{equation}
 where $B$ is the underlying Brownian motion of $B^H$.
From \eqref{once again} and \eqref{eq:h-Weiner-int}  we get that
	\begin{align*}
		\int_0^T g_W^\prime(t)~ dW(t) + \int_0^T h(t)~ dB(t)
		& = \int_0^T g_W^\prime(t)~ dW(t) + \int_0^T (K^{H,*}_{T} h)(t)~ d\BH(t)\\
		 &=  \int_0^T g_W^\prime(t)~ dW(t) + \int_0^T g^\prime_W(t)~ d\BH(t).
	\end{align*}
  Now the problem is that, generally speaking,  we can not  consider the sum of integrals $$\int_0^T g_W^\prime(t)~ dW(t) + \int_0^T g^\prime_W(t)~ d\BH(t)$$ as one integral w.r.t. to the mixture of processes because the integrals   are defined in different ways. However, we can apply Lemma \ref{boudvar}.
	To begin, choose a sequence of continuously differentiable functions $h_n$ satisfying two additional assumptions:
  \begin{itemize}
  \item [$(i)$] $\int_0^T |h(t)-h_n(t)|^2dt \rightarrow 0 \qquad \text{as }n \rightarrow \infty$;
  \item [$(ii)$] there exists a sequence of real numbers $\beta_n>0, \beta_n\rightarrow 0\; \text{as }n \rightarrow \infty$ such that $h_n(t)=0$ for $t\in [0, \beta_n]$.
      \end{itemize}
       Further,  create two sequences
\begin{equation*}\begin{gathered} g^\prime_{W,n}(t)=(K_T^{H,*}h_n)(t)=C_1t^{1/2-H}\int_{t}^T(s-t)^{-1/2-H}s^{H-1/2}h_n(s)ds\\
=C_1t^{1/2-H}\int_{t\vee\beta_n}^T(s-t)^{-1/2-H}s^{H-1/2}h_n(s)ds,
\end{gathered}\end{equation*}
and
\begin{equation*}\begin{gathered} g^\prime_{B,n}(t)=(K_0^{H}h_n)(t)= C_1 t^{H-1/2}\frac{d}{dt}\int_{0}^t(s-t)^{-1/2-H}s^{1/2-H}h_n(s)ds.
\end{gathered}\end{equation*}
Integral  $\int_{t\vee\beta_n}^T(s-t)^{-1/2-H}s^{H-1/2}h_n(s)ds$ is continuously differentiable   because   the  derivative of the integral  equals
$$\int_{t\vee\beta_n}^T(s-t)^{-1/2-H}(s^{H-1/2}h^\prime_n(s)+(H-1/2)s^{H-3/2}h_n(s))ds\in C([0,T]),$$
 and note that $g^\prime_{W,n}$ satisfies
	\begin{equation*}
	\int_0^T |K_T^H(g_W^\prime  - g^\prime_{W,n})(t)|^2 dt=\int_0^T |h(t)-h_n(t)|^2dt \rightarrow 0
	 \end{equation*}
  as $n \rightarrow \infty$.

	Taking into account   Corollary \ref{coroldiffer}, we conclude that   it is possible to  apply integration by parts to the integral $\int_0^T g^\prime_{W,n}dB^H(t)$,     for each $n > 0$,  and to obtain that
	\begin{align*}
		&\int_0^T g^\prime_{W,n}(t)~ dW(t) + \int_0^T g^\prime_{W,n}(t)~ d\BH(t)\\
		& \qquad = -\int_0^T W(t) d(g^\prime_{W,n}(t)) + W(T) g^\prime_{W,n}(t) - \int_0^T \BH(t) d(g^\prime_{W,n}(t)) + \BH(T) g^\prime_{W,n}(t)\\
		& \qquad = -\int_0^T (W(t) + \BH(t)) d(g^\prime_{W,n}(t)) + (W(T)+\BH(T)) g^\prime_{W,n}(T).
	\end{align*}
	
	  Therefore, on the set $\{|\BH(t)+W(t)|\leq \epsilon f(t),~0 \leq t \leq T\}$, we have
	\begin{equation}\begin{gathered}\label{eq:bound_gn}
		 \left|-\int_0^T (W(t) + \BH(t)) d(g^\prime_{W,n})(t) + (W(T)+\BH(T)) g^\prime_{W,n}(T)\right|\\
\leq  \int_0^T |W(t) + \BH(t)| d(|g^\prime_{W,n}|)(t) + |W(T)+\BH(T)| |g^\prime_{W,n}(T)| \\
\leq   \varepsilon \int_0^T f(t)  d(|g^\prime_{W,n}|)(t)+\varepsilon f(T) |g^\prime_{W,n}(T)|
	  = \epsilon C_n,
		\end{gathered}\end{equation}
	where $C_n $ is a constant not depending on $\epsilon$.
	Denote $$  \Delta_n(t) =  \int_0^t (g_W^\prime(s) - g^\prime_{W,n}(s)) ds+\int_0^t (g_B^\prime(s) - g^\prime_{B,n}(s)) ds, $$
and "distribute" the trend $  \Delta_n(t)$ among $W$ and $B^H $, accordingly to these two integrals, noticing that

 $$\int_0^t  K_0^{H,*}(g_B^\prime - g^\prime_{B,n})(s) dB(s)=\int_0^t (h-h_n)(s) dB(s).$$ Then, applying \eqref{eq:bound_gn} and Girsanov theorem for fBm (Lemma \ref{lemma-1}), we can rewrite and bound from above the terms from the right-hand side of \eqref{eq:prob_cm} as follows:
	\begin{equation*}\begin{gathered}
		 \mathbb{E}\left[\1_{A_{T,0,\epsilon}}
		\exp\left\{\int_0^T g_W^\prime(t) dW(t) + \int_0^T h(t) dB(t) \right\}\right]\\
 \leq\mathbb{E}\left[\1_{A_{T,0,\epsilon}}
		\exp\left\{ \epsilon C_n + \int_0^T (g_W^\prime(t) - g^\prime_{W,n}(t)) dW(t)
			+ \int_0^T (g_W^\prime(t) - g^\prime_{W,n}(t)) d\BH(t)\right\}\right]\\
		  = \exp\left\{\epsilon C_n + \frac12 \int_0^T (g_W^\prime(t) - g^\prime_{W,n}(t))^2 dt+\frac12 \int_0^T (K_T^H(g_W^\prime  - g^\prime_{W,n})(t))^2 dt\right\}\\ \times \mathbb{P}(|\BH(t)+W(t)+\Delta_n(t)|\leq \epsilon f(t), ~0 \leq t \leq T)\\=
		    \exp\left\{\epsilon C_n + \frac12 \int_0^T (g_W^\prime(t) - g^\prime_{W,n}(t))^2 dt+\frac12 \int_0^T   (h(t)  - h_n(t))^2 dt\right\}\\ \times \mathbb{P}(|\BH(t)+W(t)+\Delta_n(t)|\leq \epsilon f(t), ~0 \leq t \leq T)\\=
\exp\left\{\epsilon C_n +c_n \right\}  \mathbb{P}(|\BH(t)+W(t)+\Delta_n(t)|\leq \epsilon f(t), ~0 \leq t \leq T),
 \end{gathered}\end{equation*}
 where $c_n=\frac12 \int_0^T (g_W^\prime(t) - g^\prime_{W,n}(t))^2 dt+\frac12 \int_0^T   (h(t)  - h_n(t))^2 dt.$

	Next, using a version of the Anderson lemma (see, e.g, \cite{li-shao}, Theorem 3.1), we get that
	\begin{align*}
		\mathbb{P}(|\BH(t)+W(t)+\Delta_n(t)|\leq \epsilon f(t), ~0 \leq t \leq T)\leq
		  \mathbb{P}(A_{T,0,\epsilon}).
	 \end{align*}
In order to finish the proof, it is sufficient to recall  that $\int_0^T   (h(t)  - h_n(t))^2 dt\rightarrow 0$ consequently, applying the version of Hardy--Littlewood theorem from  Remark \ref{remark1}, we conclude that
$$\int_0^T (g_W^\prime(t) - g^\prime_{W,n}(t))^2 dt=\int_0^T   (K_T^{H,*}(h(t)  - h_n(t)))^2 dt\rightarrow 0$$ as $n\rightarrow \infty.$ Then the proof immediately follows from \eqref{eq:prob_cm}.
	\end{proof}
The next result is the main result of this section. It is an immediate corollary of the upper bound \eqref{auxil-upper}.
\begin{theo} In the conditions and terms of Lemma \ref{prop:upper_bound}
\begin{equation}\begin{gathered}\label{auxil-upper-2}
		 \lim_{\varepsilon\rightarrow 0} \mathbb{P}(A_{T,g,\varepsilon})
		   \leq \exp\left\{- \frac12 \int_0^T \left((g_W^\prime(t))^2 + (h(t))^2\right) dt \right\}\lim_{\varepsilon\rightarrow 0}\mathbb{P}\left(A_{T,0,\varepsilon}\right).
 \end{gathered}\end{equation}
\end{theo}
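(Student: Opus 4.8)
The plan is to read the statement off directly from the family of inequalities \eqref{auxil-upper} proved in Lemma \ref{prop:upper_bound}, by taking the two limits $\varepsilon\to 0$ and $n\to\infty$ in that order. The whole substantive work — the change of measure, the integration-by-parts trick of Lemma \ref{boudvar}/Corollary \ref{coroldiffer}, the approximation of $h$ by smooth $h_n$ vanishing near $0$, the Anderson-type inequality — has already been spent on \eqref{auxil-upper}; what remains is bookkeeping.

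First I would fix $n\geq 1$ and gather the two terms in \eqref{auxil-upper} that carry the common factor $\mathbb{P}(A_{T,0,\varepsilon})$, rewriting the bound for every $\varepsilon>0$ as
\begin{equation*}
\mathbb{P}(A_{T,g,\varepsilon})\;\leq\;\mathbb{P}(A_{T,0,\varepsilon})\left(\exp\left\{-\tfrac12\int_0^T\big((g_W^\prime(t))^2+(h(t))^2\big)\,dt\right\}+\exp\{\varepsilon C_n+c_n\}-1\right).
\end{equation*}
Then I would let $\varepsilon\to 0$ with $n$ held fixed. Since $f\geq 0$, a smaller $\varepsilon$ produces a smaller tube, so the events $A_{T,g,\varepsilon}$ and $A_{T,0,\varepsilon}$ are non-decreasing in $\varepsilon$; consequently $\lim_{\varepsilon\to0}\mathbb{P}(A_{T,g,\varepsilon})$ and $\lim_{\varepsilon\to0}\mathbb{P}(A_{T,0,\varepsilon})$ both exist as decreasing limits, and the ``$\lim$'' in the statement is a genuine limit rather than a $\limsup$. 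On the right-hand side the only $\varepsilon$-dependence beyond $\mathbb{P}(A_{T,0,\varepsilon})$ is the factor $\exp\{\varepsilon C_n\}\to 1$, so the bracket converges to $\exp\{-\tfrac12\int_0^T((g_W^\prime)^2+h^2)\,dt\}+e^{c_n}-1$; passing to the limit in a product of two convergent nonnegative sequences gives
\begin{equation*}
\lim_{\varepsilon\to0}\mathbb{P}(A_{T,g,\varepsilon})\;\leq\;\left(\exp\left\{-\tfrac12\int_0^T\big((g_W^\prime(t))^2+(h(t))^2\big)\,dt\right\}+e^{c_n}-1\right)\lim_{\varepsilon\to0}\mathbb{P}(A_{T,0,\varepsilon}).
\end{equation*}

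Finally I would let $n\to\infty$: since $c_n\to 0$ by Lemma \ref{prop:upper_bound}, we have $e^{c_n}-1\to 0$, and the right-hand side converges to $\exp\{-\tfrac12\int_0^T((g_W^\prime)^2+h^2)\,dt\}\cdot\lim_{\varepsilon\to0}\mathbb{P}(A_{T,0,\varepsilon})$, which is exactly \eqref{auxil-upper-2}. I do not anticipate any real obstacle; the only point worth stating explicitly is the monotonicity of the tube events in $\varepsilon$, which both legitimizes writing a limit in the conclusion and makes the limit of the product on the right-hand side equal to the product of the limits.
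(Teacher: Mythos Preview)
Your proposal is correct and is exactly the unpacking of what the paper has in mind: the paper gives no argument beyond the sentence ``It is an immediate corollary of the upper bound \eqref{auxil-upper}'', and your two-step passage to the limit (first $\varepsilon\to0$ with $n$ fixed, then $n\to\infty$) is the natural way to make that sentence precise. The monotonicity remark you add about the tube events is a helpful clarification that the paper omits.
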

\section{Asymptotics of small deviations of mixed fractional Brownian motion}\label{section-small}

Although the exact value for the probability $\mathbb{P}\{|B^H(t) + W(t)| < \varepsilon f(t), 0 \leq t \leq T\}$ generally speaking, is unknown, its asymptotics can be established or estimated in some particular cases.

\begin{example}
	Let $f(t) = a$ be some constant (without loss of generality, we can assume that $a=1$). It was established in \cite{shao-1993} and \cite{monrad-rootzen}, see also \cite{Li-Linde} that
	\begin{equation*}
	-\log \mathbb{P}\left(\sup_{0 \leq t \leq T} |B^H(t)| \leq \epsilon\right) \sim \varepsilon^{-\frac{1}{H}}, \qquad \text{as } \varepsilon \rightarrow 0,
	\end{equation*}
	where $\sim$ means that their asymptotic behaviour is the same up to a constant multiplier.

\begin{lemma}\label{lem:asymptotic}
	The asymptotic behaviour of the small deviation for a mixed fractional Brownian motion, namely, of the value
$$-\log \mathbb{P}(\sup_{0 \leq t \leq T}|W(t) + B^H(t)| \leq \epsilon),$$ is the same as the one for a fractional Brownian motion $B^H$, if $H \in (0, \frac12)$, i.e.
	\begin{equation*}
	-\log \mathbb{P}\left(\sup_{0 \leq t \leq T} |B^H(t)+W(t)| \leq \epsilon\right) \sim \varepsilon^{-\frac{1}{H}}, \qquad \text{as } \varepsilon \rightarrow 0.
	\end{equation*}
\end{lemma}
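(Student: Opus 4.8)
The plan is to sandwich the mixed small ball probability between quantities controlled entirely by the two components separately, and then invoke the fBm asymptotics recalled just above together with the classical Brownian small ball estimate. Throughout write $p_X(\delta):=\mathbb{P}(\sup_{0\le t\le T}|X(t)|\le\delta)$ for a centered Gaussian process $X$, and note that $\{x\in C([0,T]):\sup_{0\le t\le T}|x(t)|\le\delta\}$ is a symmetric convex set. I will use that $-\log p_{B^H}(\delta)\sim\delta^{-1/H}$ and, since $W$ is a Brownian motion, $-\log p_W(\delta)\sim\delta^{-2}$ (the case $H=1/2$ of the same references, or the classical Brownian small ball asymptotics), both understood up to a multiplicative constant. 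The crucial numeric observation is that $H<1/2$ gives $1/H>2$, hence $\delta^{-2}=o(\delta^{-1/H})$ as $\delta\to0$, so the Wiener part will contribute only a term of lower order.

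For the upper bound on the probability (equivalently, the lower bound on $-\log$), I would condition on the path of $W$ and apply the Anderson lemma (\cite{li-shao}, Theorem 3.1) to the centered Gaussian process $B^H$ with the deterministic shift $W(\omega)$, obtaining $\mathbb{P}(\sup_t|W(t)+B^H(t)|\le\epsilon\mid W)\le p_{B^H}(\epsilon)$ almost surely; integrating out $W$ gives $\mathbb{P}(\sup_t|W(t)+B^H(t)|\le\epsilon)\le p_{B^H}(\epsilon)$, whence $-\log\mathbb{P}(\sup_t|W(t)+B^H(t)|\le\epsilon)\ge-\log p_{B^H}(\epsilon)\ge c_1\epsilon^{-1/H}$ for all small $\epsilon$ and some $c_1>0$.

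For the lower bound on the probability (the upper bound on $-\log$), I would use the trivial inclusion $\{\sup_t|W(t)|\le\epsilon/2\}\cap\{\sup_t|B^H(t)|\le\epsilon/2\}\subseteq\{\sup_t|W(t)+B^H(t)|\le\epsilon\}$ together with the independence of $W$ and $B^H$ to get $\mathbb{P}(\sup_t|W(t)+B^H(t)|\le\epsilon)\ge p_W(\epsilon/2)\,p_{B^H}(\epsilon/2)$, and hence $-\log\mathbb{P}(\sup_t|W(t)+B^H(t)|\le\epsilon)\le-\log p_W(\epsilon/2)-\log p_{B^H}(\epsilon/2)$. Here the first term is $O(\epsilon^{-2})$ and the second is $O(\epsilon^{-1/H})$, and since $\epsilon^{-2}=o(\epsilon^{-1/H})$ the right-hand side is at most $c_2\epsilon^{-1/H}$ for all small $\epsilon$, some $c_2>0$. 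Combining the two displays yields $c_1\epsilon^{-1/H}\le-\log\mathbb{P}(\sup_t|W(t)+B^H(t)|\le\epsilon)\le c_2\epsilon^{-1/H}$, which is the claimed equivalence.

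I do not expect a genuine obstacle here: everything reduces to comparing the two rates, and the hypothesis $H<1/2$ is precisely what makes the Brownian rate $\epsilon^{-2}$ subordinate to the rough rate $\epsilon^{-1/H}$. The one point requiring a little care is citing the Anderson-type inequality in the correct direction, i.e.\ conditioning on $W$ (not on $B^H$) so that the dominating rate $\epsilon^{-1/H}$ survives in the lower bound for $-\log$. Obtaining the sharp constants rather than the ``up to a multiplier'' statement would be a different and harder matter, tied to the precise-constant theory mentioned in the introduction.
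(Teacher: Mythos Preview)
Your proof is correct. The lower bound on the probability (upper bound on $-\log$) via the inclusion $\{\sup|W|\le\epsilon/2\}\cap\{\sup|B^H|\le\epsilon/2\}\subseteq\{\sup|W+B^H|\le\epsilon\}$ and independence is exactly what the paper does.

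For the upper bound on the probability (lower bound on $-\log$) you take a different, and arguably cleaner, route. The paper treats the mixed process $W+B^H$ as a single centered Gaussian process with stationary increments, checks that $\sigma^2(h)=h+h^{2H}$ is concave and satisfies the doubling condition $c_1\sigma(h)\le\sigma(2h)\le c_2\sigma(h)$, and then applies Theorem~4.5 of \cite{li-shao} to obtain $\mathbb{P}(\sup_t|W(t)+B^H(t)|\le\sigma(\epsilon))\le e^{-C/\epsilon}$, followed by a substitution $\delta=\epsilon^H$. You instead condition on $W$, apply Anderson's inequality to $B^H$ with the deterministic shift $W(\omega)$, and get $\mathbb{P}(\sup_t|W+B^H|\le\epsilon)\le p_{B^H}(\epsilon)$ directly, then invoke the known fBm small ball rate. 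Your argument is shorter and uses a tool (Anderson) already invoked elsewhere in the paper; the paper's argument has the minor conceptual advantage of handling the mixed process in one shot without reducing to the pure fBm rate, but at the cost of verifying the hypotheses of a more specialized result. Either way the content is the same: the rougher component dictates the rate because $H<1/2$.
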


\begin{proof}
	Consider $T=1$. On the one hand,
	\begin{equation*}
	\begin{gathered}
	\P\left(\sup_{0 \leq t \leq 1} |B^H(t) + W(t)| \leq \varepsilon \right)
	\geq \P\left(\sup_{0 \leq t \leq 1}|W(t)| \leq \frac\varepsilon 2\right)\cdot
	\P\left(\sup_{0 \leq t \leq 1}|B^H(t)| \leq \frac\varepsilon 2\right),
	\end{gathered}
	\end{equation*}
	and
	\begin{equation*}
	\begin{gathered}
	-\log\P\left(\sup_{0 \leq t \leq 1} |W(t) + B^H(t)| \leq \varepsilon\right)
	\leq -\log\P\left(\sup_{0 \leq t \leq 1} |W(t)| \leq \frac{\varepsilon}2\right)
	-\log\P\left(\sup_{0 \leq t \leq 1} |B^H(t)| \leq \frac{\varepsilon}2\right)\\
	\sim \varepsilon^{-2} + \varepsilon^{-\frac1H} \sim \varepsilon^{-\frac1H}
	\qquad\text{as } \varepsilon \rightarrow 0.
	\end{gathered}
	\end{equation*}
	On the other hand, we can apply Theorem 4.5 from \cite{li-shao}. According to this theorem, for any centered Gaussian process $X$ with stationary increments satisfying
	\begin{enumerate}[$(i)$]
		\item
		$X_0 = 0$,
		\item\label{cond:ii}
		$c_1 \sigma(h) \leq \sigma(2h) \leq c_2 \sigma(h)$, for $0 \leq h \leq \frac{1}{2}$ and some $1 < c_1 \leq c_2 < 2$,
		\item\label{cond:iii}
		$\sigma^2$ is concave on $(0,1)$,
	\end{enumerate}
	where $\sigma^2(|t-s|) = \E\left[|X_t-X_s|^2\right]$, we have that for any $0 < \varepsilon < 1$, there exists $0 < C < \infty$ such that
	\begin{equation*}
	\P(\sup_{0 \leq t \leq 1} |X_t| \leq \sigma(\varepsilon)) \leq e^{-\frac C \varepsilon}
	\end{equation*}
	In our case, $\sigma^2(|t-s|) = |t-s| + |t-s|^{2H}$, $0 < 2H <1$, and therefore conditions $(\ref{cond:ii})$ and $(\ref{cond:iii})$ are satisfied. It means that for any $0 < \varepsilon < 1$,
	\begin{equation*}
	\P\left(\sup_{0 \leq t \leq 1} |W(t) + B^H(t)| \leq \sqrt{\varepsilon + \varepsilon^{2H}}\right) \leq e^{-\frac C \varepsilon}.
	\end{equation*}
	If we replace $\varepsilon^H = \delta$, then $\varepsilon = \delta^{\frac 1H}$, and
\begin{equation*}
	  \P\left(\sup_{0 \leq t \leq 1} |W(t) + B^H(t)| \leq  \delta \right)\leq\P\left(\sup_{0 \leq t \leq 1} |W(t) + B^H(t)| \leq \sqrt{\delta^2+\delta^{\frac 1H}}\right)
 \leq e^{-\frac {C}{\delta^{\frac 1H}}},
	\end{equation*}
	whence
\begin{equation*}
	-\log \P\left(\sup_{0 \leq t \leq 1} |W(t) + B^H(t)| \leq \delta\right) \geq \frac C {\delta^{\frac 1H}},
	\end{equation*}
	and  the proof follows.
\end{proof}
\end{example}
\begin{remark}\label{rem-el-nouty} Generally speaking, the lower and upper functional classes  for Gaussian processes were the subject of detailed research. In particular,   the mixed fractional Brownian motion, integrated fractional Brownian motion and  sub-fractional Brownian motion   were studied from this point of view in \cite{el2003fractional}, \cite{el2004lower}, \cite{el2008fractional} and \cite{el2012lower}. However, for our simple result in Lemma \ref{lem:asymptotic}, it is more reasonable for the reader's convenience to give a direct and simple proof, which was done, rather than to provide some adaptation.
\end{remark}

\begin{example}
	Let $f$ be positive measurable function, $f \in AC([0,T])$ be separated from $0$, with $f'$ having bounded variation. Then it was established in \cite{novikov} that
	\begin{equation*}
	\P\left(|W(t)| \leq \frac \epsilon 2 f(t),~0 \leq t \leq T \right)
	= \left(\frac{f(T)}{f(0)}\right)^{1/2} \P\left(|W(t)| \leq \varepsilon,~0 \leq t \leq 4\int_0^T f^{-2}(u)du\right),
	\end{equation*}
	whence
	\begin{equation*}
	-\log \P\left(|W(t)| \leq \frac \varepsilon 2 f(t), 0 \leq t \leq T\right) \sim \varepsilon^{-2}.
	\end{equation*}
	Furthermore, if we denote $f^* = \max_{0 \leq t \leq T} f(t)$, then, on the one hand,
	\begin{equation*}
	P\left(|W(t) + B^H(t)| \leq \varepsilon f(t)\right) \geq P\left(|W(t)| \leq \frac{\varepsilon f(t)}2, 0 \leq t \leq T\right)\P\left(\sup_{0 \leq t \leq T}|B^H(t)| \leq \frac{\varepsilon f^*}{2}\right),
	\end{equation*}
	and on the other hand,
	\begin{equation*}
	\P\left(|W(t) + B^H(t)| \leq \varepsilon f(t), 0 \leq t \leq T\right)
	\leq \P\left(\sup_{0 \leq t \leq T} |W(t) + B^H(t)| \leq \varepsilon f^*\right).
	\end{equation*}
	Applying Lemma \ref{lem:asymptotic}, we get that
	\begin{equation*}
	- \log \P\left(|W(t) + B^H(t)| \leq \varepsilon f(t), 0 \leq t \leq T\right)
	\sim \varepsilon^{-\frac1H} \qquad \text{as } \varepsilon \rightarrow 0.
	\end{equation*}
\end{example}

\appendix

\section{Proofs and useful results}\label{app:A}
\subsection{Properties of integral operators}  Consider two classical theorems that give sufficient conditions of boundedness and compactness of the linear integral operators. The first theorem gives   sufficient conditions for the linear integral operator to be continuous.
\begin{theo}\label{kant-aki-1} [\cite{kant-ak}, Theorem 1, p. 324] Let $\kappa(s,t), s,t \in [0,T]$ be an integral kernel with the properties
	$$\sup_{s\in [0,T]}\int_0^T|\kappa(s,t)|^rdt\leq C, ~~~\sup_{t\in [0,T]}\int_0^T|\kappa(s,t)|^\sigma ds\leq C$$
	for some constant $C>0$ and for some $r,\sigma>0$. Then the integral operator $Ax(s)=\int_0^T\kappa(s,t)x(t)dt$ is a linear continuous operator from $L_p([0,T])$ into $L_q([0,T])$ for any $$q\geq p> 1,~ q\geq\sigma, ~\left(1-\frac{\sigma}{q}\right)\frac{p}{p-1}\leq r.$$
\end{theo}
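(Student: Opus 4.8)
The plan is to establish the $L_p([0,T])\to L_q([0,T])$ bound directly, by a three-factor H\"older splitting of the integrand — this is a form of the generalized Young (Schur-type) inequality. Linearity of $Ax(s)=\int_0^T\kappa(s,t)x(t)\,dt$ is immediate, so only the norm estimate is at stake. First I would fix $x\in L_p([0,T])$ and, pointwise in $s$, write
$$|\kappa(s,t)|\,|x(t)| = |\kappa(s,t)|^{\alpha}\cdot\bigl(|\kappa(s,t)|^{\beta}|x(t)|^{\gamma}\bigr)\cdot|x(t)|^{\delta},\qquad \alpha+\beta=1,\ \gamma+\delta=1,$$
and apply H\"older's inequality in $t$ with three exponents $p_1,p_2,p_3$ satisfying $1/p_1+1/p_2+1/p_3\le 1$ (on the finite interval $[0,T]$ equality is not needed, at worst one picks up a factor depending on $T$).

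Next I would pin down the parameters so that each factor is controlled by one of the three available bounds: set $\alpha p_1=r$, so that $\int_0^T|\kappa(s,t)|^{\alpha p_1}\,dt\le C$ by the first hypothesis; take $p_2=q$ with $\beta q=\sigma$; and set $\gamma q=p$ together with $\delta p_3=p$, so that $\int_0^T|x(t)|^{\delta p_3}\,dt=\|x\|_p^p$. Raising the resulting pointwise bound for $|Ax(s)|$ to the power $q$, integrating in $s$ and applying Fubini to the middle term reduces its inner integral to $\int_0^T|\kappa(s,t)|^{\sigma}\,ds\le C$ by the second hypothesis, leaving $\int_0^T|x(t)|^{p}\,dt$ outside. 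Tracking the powers of $\|x\|_p$ one gets $pq/p_3+p=q$, which is automatic because $\delta=1-p/q$; hence $\|Ax\|_q^q\le C'\,\|x\|_p^{q}$, which is the desired continuity.

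It then remains only to check that the chosen $p_1,p_2,p_3$ are admissible. With the above choices $1/p_1=(1-\sigma/q)/r$, $1/p_2=1/q$ and $1/p_3=\delta/p=1/p-1/q$, so $1/p_1+1/p_2+1/p_3=(1-\sigma/q)/r+1/p$, and the requirement that this be $\le 1$ is exactly $\bigl(1-\tfrac{\sigma}{q}\bigr)\tfrac{p}{p-1}\le r$ — the stated hypothesis. The remaining restrictions $q\ge p>1$ and $q\ge\sigma$ are precisely what guarantees $p_1,p_2,p_3\ge 1$ and $\alpha,\beta,\gamma,\delta\in[0,1]$; the boundary cases $q=\sigma$ (then $\alpha=0$) and $q=p$ (then $\delta=0$) are handled identically after dropping the corresponding trivial factor and using a two-exponent H\"older inequality.

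I expect the only genuine obstacle to be organizational rather than analytic: finding the ansatz for the splitting exponents under which all three H\"older factors simultaneously match the two kernel bounds and $\|x\|_p$. Once the ansatz is written down every parameter is forced, the feasibility inequality coincides exactly with the hypothesis so that no room is lost, and the remainder is routine bookkeeping with Fubini and H\"older.
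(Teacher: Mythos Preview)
Your argument is correct: the three-factor H\"older splitting with the exponents you chose forces exactly the feasibility condition $\bigl(1-\tfrac{\sigma}{q}\bigr)\tfrac{p}{p-1}\le r$, and the Fubini step then closes the estimate with the right power of $\|x\|_p$. The boundary cases and the slack in $1/p_1+1/p_2+1/p_3\le 1$ on a finite interval are handled as you indicate.

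However, there is nothing to compare against: the paper does not prove this theorem at all. It is quoted verbatim as Theorem~1, p.~324 of Kantorovich--Akilov, \emph{Functional Analysis}, and used as a black box (see Remark~\ref{main rem}). Your proof is in fact the classical Schur-type argument that appears in that reference, so you have reconstructed the original source's proof rather than offered an alternative to anything in the present paper.
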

If we strengthen the conditions of Theorem \ref{kant-aki-1}, we get a compact integral operator.
\begin{theo}\label{kant-aki-2} [\cite{kant-ak}, Theorem 3, p. 326] Let $\kappa(s,t), s,t \in [0,T]$ be an integral kernel with the properties
	$$\sup_{s\in [0,T]}\int_0^T|\kappa(s,t)|^rdt\leq C, ~~~\sup_{t\in [0,T]}\int_0^T|\kappa(s,t)|^\sigma ds\leq C$$
	for some constant $C>0$ and for some $r,\sigma>0$. Then the integral operator $Ax(s)=\int_0^T\kappa(s,t)x(t)dt$ is a linear compact operator from $L_p([0,T])$ into $L_q([0,T])$ for any $$q\geq p> 1,~ q>\sigma, ~\left(1-\frac{\sigma}{q}\right)\frac{p}{p-1}< r.$$
	
\end{theo}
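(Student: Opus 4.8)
The plan is to derive the compactness of $A$ from the boundedness statement of Theorem~\ref{kant-aki-1} by a truncation argument, using in an essential way that the exponent inequalities are now \emph{strict}. The starting observation is that a \emph{bounded} kernel on the finite interval already produces a compact operator from $L_p([0,T])$ to $L_q([0,T])$ when $1<p$ and $q<\infty$: given $\widetilde\kappa\in L_\infty([0,T]^2)$ with associated operator $\widetilde A$, if $x_m$ is bounded in $L_p([0,T])$ then, $L_p([0,T])$ being reflexive, a subsequence converges weakly, $x_m\rightharpoonup x$; pairing the weak limit against $\widetilde\kappa(s,\cdot)\in L_\infty([0,T])\subset L_{p/(p-1)}([0,T])$ gives $\widetilde Ax_m(s)\to\widetilde Ax(s)$ for each $s$, while $|\widetilde Ax_m(s)|\le\|\widetilde\kappa\|_\infty\sup_m\|x_m\|_{1}$ is bounded uniformly in $m,s$, so by dominated convergence $\widetilde Ax_m\to\widetilde Ax$ in $L_q([0,T])$. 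Hence $\widetilde A$ is compact.

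Next I would truncate the given kernel: set $\kappa_n:=\kappa\,\1_{\{|\kappa|\le n\}}$ and let $A_n$ be the corresponding operator, compact by the previous step. The crux is to show $\|A-A_n\|_{L_p\to L_q}\to 0$. Since $|\kappa-\kappa_n|\le|\kappa|\,\1_{\{|\kappa|>n\}}$ and, by Chebyshev's inequality, $|\{t:|\kappa(s,t)|>n\}|\le Cn^{-r}$ uniformly in $s$, H\"older's inequality gives, for every $\rho<r$, $\sup_s\int_0^T|\kappa-\kappa_n|^\rho\,dt\le C\,n^{-(r-\rho)}\to 0$, and symmetrically $\sup_t\int_0^T|\kappa-\kappa_n|^\tau\,ds\to 0$ for every $\tau<\sigma$. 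Because $q>\sigma$ and $(1-\sigma/q)\tfrac{p}{p-1}<r$ are strict, I may fix $\tau<\sigma$ and $\rho<r$ sufficiently close to $\sigma$ and $r$ that $q\ge\tau$ and $(1-\tau/q)\tfrac{p}{p-1}\le\rho$ still hold; applying Theorem~\ref{kant-aki-1} — or, more precisely, the quantitative norm bound furnished by its proof — to the kernel $\kappa-\kappa_n$ with exponents $\rho,\tau$ then estimates $\|A-A_n\|_{L_p\to L_q}$ by an expression built from $\sup_s\int_0^T|\kappa-\kappa_n|^\rho\,dt$ and $\sup_t\int_0^T|\kappa-\kappa_n|^\tau\,ds$ with positive powers, hence vanishing with them. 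Thus $A_n\to A$ in operator norm, and since the compact operators form a norm-closed subspace of the bounded operators $L_p([0,T])\to L_q([0,T])$, $A$ is compact.

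The delicate step is precisely the passage from $\kappa_n\to\kappa$ — which holds only almost everywhere, or in $L_r([0,T]^2)$ — to operator-norm convergence $A_n\to A$: with the naive choice $\rho=r$, $\tau=\sigma$ the mixed norms $\sup_s\int_0^T|\kappa-\kappa_n|^\rho\,dt$ need not tend to $0$ (only the unsupremized integrals do, by dominated convergence), so one must genuinely lower the exponents a little, which is feasible solely because of the strict inequalities in the hypotheses. Verifying that the lowered exponents still lie in the range covered by Theorem~\ref{kant-aki-1}, and that the resulting norm bound is a product of the vanishing mixed norms with strictly positive exponents, is where this strictness earns its keep; the remaining ingredients (reflexivity plus dominated convergence for bounded kernels, Chebyshev, H\"older, closedness of the compact operators) are routine.
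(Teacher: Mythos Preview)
The paper does not prove this theorem at all: it is quoted verbatim as a classical result from Kantorovich--Akilov (Theorem~3, p.~326) and used as a black box. So there is no ``paper's own proof'' to compare against.

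Your proposal is a correct self-contained proof and is in fact essentially the textbook argument: approximate $\kappa$ by its truncations $\kappa_n=\kappa\1_{\{|\kappa|\le n\}}$, observe that bounded kernels on a finite interval yield compact operators $L_p\to L_q$ (your weak-to-strong argument via reflexivity and dominated convergence is fine for $1<p$, $q<\infty$), and then use the \emph{strictness} of the exponent conditions to lower $r,\sigma$ slightly to $\rho,\tau$ so that the mixed norms $\sup_s\int|\kappa-\kappa_n|^\rho\,dt$ and $\sup_t\int|\kappa-\kappa_n|^\tau\,ds$ vanish uniformly (your Chebyshev--H\"older computation is correct), while $(\rho,\tau)$ still lie in the range of Theorem~\ref{kant-aki-1}. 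The one point you rightly flag is that you need not merely the \emph{statement} of Theorem~\ref{kant-aki-1} but the quantitative Schur-type bound from its proof, namely that $\|A\|_{L_p\to L_q}$ is controlled by a product of positive powers of the two mixed-norm constants; this is indeed what the standard proof of Theorem~\ref{kant-aki-1} delivers, so the step is legitimate. Your closing remark that norm limits of compact operators are compact completes the argument.
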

\subsection{Equations for the minimizer of the integral   involving both function and its fractional integral}\label{app:A2}
Here we adapt Theorem 3.1 and Theorem 6.2 presented in \cite{almeida2009calculus} that are needed to solve minimization problem \eqref{eq:minimization}. The results we present here are slightly less general however suitable for our  context.
Both theorems are concerned with the following optimization problem:
\begin{equation}
\min_{\varphi\in L^2([0,T])} \int_0^T L(t,  \varphi(t), (I^{1-\alpha}_{0}\varphi)(t)) dt,
\label{eq:Lfct_appendix}
\end{equation}
where $L=L(t,x,y) \in C^1([0,T]\times \R^2; \R)$, ${t \rightarrow \partial_2 L(t,  \varphi(t), (I^{1-\alpha}_{0}\varphi)(t))}$ admits continuous left-side Riemann-Liouville fractional integral of order $(1-\alpha)$, $\alpha\in(0,1)$.

Here and below $\partial_1$ and $\partial_2$ denote the differentiation in $x$ and $y$, respectively.

The first theorem provides a necessary condition for $y$ to be a minimizer of \eqref{eq:Lfct_appendix}.

\begin{theo} \label{thm:thm1almeida}
	Let $y$ be a local minimizer of \eqref{eq:Lfct_appendix}. Then, $\varphi$ satisfies the fractional Euler-Lagrange equation
	\begin{equation}
	\left(I^{1-\alpha}_{T^-} \partial_2 L(u, \varphi (u), (I^{1-\alpha}_{0}\varphi)(u)\right)(t)
	+   \partial_1 L\left(u,  \varphi(u), (I^{1-\alpha}_{0}\varphi)(u)\right)(t)
	=0
	\label{eq:mincondition_app}
	\end{equation}
	for all $t \in [0,T]$.
\end{theo}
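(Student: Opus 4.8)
The plan is to run the classical first–variation argument of the calculus of variations, adapted to the fractional integral. Fix an arbitrary test function $h\in L^2([0,T])$ and, for $\eta\in\R$, set $\varphi_\eta=\varphi+\eta h$, which again belongs to $L^2([0,T])$; by linearity of the Riemann--Liouville integral, $(I^{1-\alpha}_{0}\varphi_\eta)(t)=(I^{1-\alpha}_{0}\varphi)(t)+\eta\,(I^{1-\alpha}_{0}h)(t)$. Introduce the scalar function
\[
j(\eta)=\int_0^T L\bigl(t,\varphi_\eta(t),(I^{1-\alpha}_{0}\varphi_\eta)(t)\bigr)\,dt .
\]
Since $\varphi$ is a local minimizer of \eqref{eq:Lfct_appendix}, the point $\eta=0$ is a local minimum of $j$, hence $j'(0)=0$ as soon as $j$ is differentiable at $0$.

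The next step is to differentiate under the integral sign. Because $L\in C^1([0,T]\times\R^2;\R)$, the integrand is differentiable in $\eta$, and for $|\eta|$ small the difference quotients are dominated by an integrable function: this follows from continuity of $\partial_1 L$ and $\partial_2 L$ on the compacts swept out by $(t,\varphi_\eta(t),(I^{1-\alpha}_{0}\varphi_\eta)(t))$, together with the Hardy--Littlewood bound of Remark \ref{remark1}, which ensures $I^{1-\alpha}_{0}h\in L^2([0,T])\subset L^1([0,T])$. Differentiating and setting $\eta=0$ then gives
\begin{multline*}
0=j'(0)=\int_0^T\Bigl[\partial_1 L\bigl(t,\varphi(t),(I^{1-\alpha}_{0}\varphi)(t)\bigr)\,h(t)\\
+\partial_2 L\bigl(t,\varphi(t),(I^{1-\alpha}_{0}\varphi)(t)\bigr)\,(I^{1-\alpha}_{0}h)(t)\Bigr]dt .
\end{multline*}

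The crucial analytic ingredient is the fractional integration-by-parts formula (see \cite{samko1993fractional}): for admissible $u,v$ one has $\int_0^T u(t)\,(I^{1-\alpha}_{0}v)(t)\,dt=\int_0^T v(t)\,(I^{1-\alpha}_{T^-}u)(t)\,dt$. I would apply it with $v=h$ and $u(t)=\partial_2 L\bigl(t,\varphi(t),(I^{1-\alpha}_{0}\varphi)(t)\bigr)$, which is admissible since, by the standing hypothesis, $t\mapsto\partial_2 L(t,\varphi(t),(I^{1-\alpha}_{0}\varphi)(t))$ is integrable on $[0,T]$ (its fractional integral of order $1-\alpha$ being assumed well-defined and continuous), while $h\in L^2([0,T])$, so Fubini's theorem justifies the exchange. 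This converts the second summand above and produces
\begin{multline*}
0=\int_0^T\Bigl[\bigl(I^{1-\alpha}_{T^-}\partial_2 L(u,\varphi(u),(I^{1-\alpha}_{0}\varphi)(u))\bigr)(t)\\
+\partial_1 L\bigl(t,\varphi(t),(I^{1-\alpha}_{0}\varphi)(t)\bigr)\Bigr]h(t)\,dt
\end{multline*}
for every $h\in L^2([0,T])$. Since the bracketed function is continuous by hypothesis, the fundamental lemma of the calculus of variations forces it to vanish identically on $[0,T]$, which is precisely the fractional Euler--Lagrange equation \eqref{eq:mincondition_app}.

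I expect the genuine work to lie not in the variational bookkeeping but in the two interchange-of-limits steps — differentiation under the integral in the computation of $j'(0)$, and verification of the hypotheses of the fractional integration-by-parts identity (a Fubini/Tonelli estimate in the appropriate $L^p$--$L^q$ scale). Both are controlled exactly by the regularity imposed on $L$ and on the composed map $t\mapsto\partial_2 L(t,\varphi(t),(I^{1-\alpha}_{0}\varphi)(t))$, together with the Hardy--Littlewood mapping properties of $I^{1-\alpha}_{0}$ recorded in Remark \ref{remark1}; of course, one could instead adapt the corresponding argument of \cite{almeida2009calculus} directly, specialised to the present, less general, class of Lagrangians.
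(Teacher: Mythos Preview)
Your proposal is correct and follows the standard first-variation route. Note that the paper does not actually give a self-contained proof of this theorem: it is stated in the appendix as an adaptation of a result from \cite{almeida2009calculus}, and the same variational argument (perturb the minimizer by $\varepsilon\eta$, differentiate at $\varepsilon=0$, then apply the fractional integration-by-parts formula from \cite{samko1993fractional}) appears only in the proof of Lemma~\ref{prop:necessary_min}, specialized to the particular Lagrangian \eqref{eq:Lfct}. Your write-up is essentially that argument carried out at the level of generality of the theorem, with the technical interchange-of-limits steps made explicit; there is no substantive difference in approach.
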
	

This next theorem gives a sufficient condition for the candidate minimizer $y$ from Theorem \ref{thm:thm1almeida} to be a minimizer of \eqref{eq:Lfct_appendix}.

\begin{theo} \label{thm:thm2almeida}
	Let $L(t,x,y)$ satisfy
	\begin{equation*}
	L(t,x+x_1,y+y_1)-L(t,x,y) \geq \partial_1 L(t,x,y)x_1+\partial_2 L(t,x,y)y_1
	\end{equation*}
	for all $(t,x,y)$, $(t,x+x_1,y+y_1) \in [0,T]\times \R^2$. If $\varphi_0$ curve satisfies \eqref{eq:mincondition_app}, then $\varphi_0$ minimizes \eqref{eq:Lfct_appendix}.
\end{theo}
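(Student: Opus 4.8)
The plan is to run the standard convexity argument of the calculus of variations, adapted to the Riemann--Liouville operator. Write $J(\varphi) = \int_0^T L(t, \varphi(t), (I^{1-\alpha}_0\varphi)(t))\, dt$ for $\varphi \in L^2([0,T])$, let $\varphi_0$ be a solution of the fractional Euler--Lagrange equation \eqref{eq:mincondition_app}, and fix an arbitrary competitor $\varphi_0 + \eta$ with $\eta \in L^2([0,T])$. First I would use the linearity of $I^{1-\alpha}_0$, namely $(I^{1-\alpha}_0(\varphi_0+\eta))(t) = (I^{1-\alpha}_0\varphi_0)(t) + (I^{1-\alpha}_0\eta)(t)$, to express the increment $J(\varphi_0+\eta) - J(\varphi_0)$ as an integral over $[0,T]$ of the pointwise difference $L(t, \varphi_0(t)+\eta(t), (I^{1-\alpha}_0\varphi_0)(t) + (I^{1-\alpha}_0\eta)(t)) - L(t, \varphi_0(t), (I^{1-\alpha}_0\varphi_0)(t))$.

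Next I would apply the hypothesized inequality pointwise in $t$ with $x = \varphi_0(t)$, $y = (I^{1-\alpha}_0\varphi_0)(t)$, $x_1 = \eta(t)$ and $y_1 = (I^{1-\alpha}_0\eta)(t)$, obtaining
\[
J(\varphi_0+\eta) - J(\varphi_0) \geq \int_0^T \Big(\partial_1 L(t, \varphi_0(t), (I^{1-\alpha}_0\varphi_0)(t))\,\eta(t) + \partial_2 L(t, \varphi_0(t), (I^{1-\alpha}_0\varphi_0)(t))\,(I^{1-\alpha}_0\eta)(t)\Big)\, dt.
\]
The key step is then to transfer the fractional integral off $\eta$ in the second term by fractional integration by parts, i.e.\ the adjointness of $I^{1-\alpha}_0$ and $I^{1-\alpha}_{T^-}$ on $L^2([0,T])$ (a Fubini argument; see \cite{samko1993fractional}):
\[
\int_0^T \partial_2 L(t,\ldots)\,(I^{1-\alpha}_0\eta)(t)\, dt = \int_0^T \big(I^{1-\alpha}_{T^-}\,\partial_2 L(\cdot, \varphi_0(\cdot), (I^{1-\alpha}_0\varphi_0)(\cdot))\big)(t)\,\eta(t)\, dt,
\]
which is legitimate because $t \mapsto \partial_2 L(t, \varphi_0(t), (I^{1-\alpha}_0\varphi_0)(t))$ is assumed to admit a continuous right-sided fractional integral of order $1-\alpha$ and $\eta \in L^2([0,T])$. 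Substituting this back, the lower bound becomes $\int_0^T \eta(t)\big[(I^{1-\alpha}_{T^-}\partial_2 L)(t) + \partial_1 L(t,\ldots)\big]\, dt$, whose integrand vanishes identically by \eqref{eq:mincondition_app}. Hence $J(\varphi_0+\eta) \geq J(\varphi_0)$ for every $\eta \in L^2([0,T])$, which is exactly the claim.

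The hard part will not be the structure of the argument but the integrability bookkeeping: one must verify that every integral above is absolutely convergent, so that the manipulations — in particular the application of Fubini's theorem behind the integration-by-parts identity — are valid. Since $L \in C^1$, $\varphi_0 \in L^2([0,T])$, and $I^{1-\alpha}_0$ maps $L^2$ boundedly into a suitable $L^q$ by the Hardy--Littlewood theorem (cf.\ Remark \ref{remark1}), the functions $t \mapsto \partial_i L(t, \varphi_0(t), (I^{1-\alpha}_0\varphi_0)(t))$ are measurable and, together with the standing regularity assumption on $\partial_2 L$, pair integrably against any $\eta \in L^2([0,T])$; this is where I expect the (routine) effort to lie. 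No compactness or further structure on $L$ is needed, and since the convexity hypothesis is assumed globally in $(x_1,y_1)$, the conclusion is global — not merely local — minimality.
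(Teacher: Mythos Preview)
Your argument is correct and is exactly the standard convexity-based sufficiency proof in the fractional calculus of variations: linearity of $I^{1-\alpha}_0$, the pointwise convexity hypothesis, fractional integration by parts to shift $I^{1-\alpha}_0$ to $I^{1-\alpha}_{T^-}$, and then the Euler--Lagrange equation kills the first variation. Note, however, that the paper does not give its own proof of this statement; it is recorded in Appendix~\ref{app:A2} as an adaptation of Theorem~6.2 of \cite{almeida2009calculus} and is used as a black box in the proof of Theorem~\ref{prop:sufficient}. What you wrote is essentially the proof one finds in that reference, so there is nothing to compare against in the present paper beyond confirming that your approach matches the cited source.
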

\subsection{Proofs of  the statements concerning integral operators and equations}
\textbf{Proof of Lemma \ref{prop:necessary_min}.}
 First note that using Definition \ref{def:RLint}, the integrand $(g_W^\prime(t))^2 + (h(t))^2$ in \eqref{eq:minimization} can be rewritten as
	\begin{align*}
		&(g_W^\prime(t))^2 + (h(t))^2\\
		&\qquad = (g^\prime(t))^2 - 2 g^\prime(t) g^\prime_B(t) + (g^\prime_B(t))^2 + C^{-2}_1 t^{2H-1}\left[\left(I_{0}^{1/2-H} \left(\cdot^{1/2-H} g^\prime_B\right)\right)(t)\right]^2.
	 \end{align*}
	The minimization \eqref{eq:minimization} can be described as
	\begin{equation*}
		\min_{x\in \in L_2([0,T])} \int_0^T L\left(t, x(t), I_{0}^{1/2-H}\left(\cdot^{1/2-H}x(\cdot)\right)\right) ~dt,
		 \end{equation*}
	where
	\begin{equation}
	L(t,x,y) =  - 2x g(t) + x^2 + C^{-2}_1 t^{2H-1}y^2.
	\label{eq:Lfct}
	\end{equation}

	Now we can apply standard minimization procedure, described, e.g., in \cite{almeida2009calculus} in a somewhat different, but in a sense, even more intricate situation. More precisely, let $x_0$ be a minimizing function. Consider the disturbed function $x_\varepsilon(t)= x_0(t)+\varepsilon \eta(t)$. The derivative of $\int_0^T L(t, x_\varepsilon(t), I_{0}^{1/2-H}x_\varepsilon(t)) ~dt$ in $\varepsilon$ at the point $\varepsilon=0$ should be zero.   However, it follows from the linearity of fractional integral that the derivative equals, up to a constant multiplier,

\begin{align}\label{many-int-1}&\int_0^T\bigg( - g(t)\eta(t)+x_0(t)\eta(t)\nonumber\\
&+C^{-2}_1 t^{2H-1}\left(I_{0}^{1/2-H}\left(\cdot^{1/2-H}x_0\right)\right) \big(t\big) \left(I_{0}^{1/2-H}\left(\cdot^{1/2-H}
\eta\right)\right)\big(t\big) \bigg)dt.
\end{align}
Applying   integration	by parts for fractional integral from \cite{samko1993fractional} we get that
\begin{align}
\label{many-int-2}
&\int_0^T	 t^{2H-1}\left(I_{0}^{1/2-H}\left(\cdot^{1/2-H}x_0\right)\right) \left(t\right)  \left(I_{0}^{1/2-H}\left(\cdot^{1/2-H}
\eta\right)\right)\big(t\big)  dt\nonumber\\
&=\int_0^T	 I_{T^-}^{1/2-H}\left(\cdot^{2H-1}\left(I_{0}^{1/2-H}\left(\cdot^{1/2-H}x_0\right)\right) \right) \big(t\big)  t^{1/2-H}\eta(t) dt,
\end{align}

Substituting \eqref{many-int-2} into \eqref{many-int-1} and taking into account that $\eta$ can be any bounded function, we get \eqref{eq:min_condition2}.
\hfill$\Box$

\textbf{Proof of Lemma \ref{properties}.}
 \begin{itemize}
		\item[$(i)$] It immediately follows from the representation  \eqref{kernel}.
		\item[$(ii)$] In order to establish this,  we note that $\kappa$ is nonnegative, and so it is sufficient to bound $\kappa$ from above.   First focus on $0 \leq z < t$. Writing $ {u}-t = \left(t-  z\right)x$,  for $0 \leq z < t$, we have
		\begin{align}
		\kappa(z,t)   &= (tz)^{1/2-H}\int_t^T (u-t)^{-1/2-H} u^{2H-1} (u-z)^{-1/2-H}~du \nonumber\\
		&= t^{2H-1}(tz)^{1/2-H} \left(t-z\right)^{-2H} \times\nonumber\\
		&\qquad \int_0^{\frac{T-t}{t-z}} x^{-1/2-H} (x+1)^{-1/2-H}
		\left(\left(1-\frac zt\right)x+1\right)^{2H-1}~dx \nonumber\\
		&\leq t^{H-1/2}z^{1/2-H} \left(t-z\right)^{-2H} \int_0^{\infty} x^{-1/2-H} (x+1)^{-1/2-H}~dx\nonumber\\
		&\leq C t^{H-1/2}z^{1/2-H} \left(t-z\right)^{-2H}.\label{eq:Kzt1}
		\end{align}
		Similarly, for $t < z \leq T$, using ${u}-z = \left( z -t\right)x$ yields
		\begin{align}
		\kappa(z,t)   &= (tz)^{1/2-H}\int_z^T (u-t)^{-1/2-H} u^{2H-1} (u-z)^{-1/2-H}~du\nonumber\\
		&= z^{2H-1}(tz)^{1/2-H}\left(  z  -t\right)^{-2H}\times\nonumber\\ &\qquad\int_0^{\frac{T-z}{z-t}} x^{-1/2-H} (x-1)^{-1/2-H}
		\left(\left(1-\frac tz  \right)x+1\right)^{2H-1}~dx\nonumber\\
		&\leq C z^{H-1/2}t^{1/2-H} \left(z-t\right)^{-2H}.\label{eq:Kzt2} 	
		\end{align}
		\item[$(iii,a)$] It follows immediately from \eqref{eq:Kzt1} and \eqref{eq:Kzt2} that for any $ r<\frac1{2H} $,
		\begin{align*} &\int_0^T \kappa(z,t) ^rdz   \\
		&\leq ~C\bigg( \int_0^t \left(t^{H- 1/2}    z^{1/2-H}  (t-z)^{-2H}\right)^{r} dz+\int_t^T  \left(t^{1/2- H}    z^{H-1/2} (z-t)^{-2H}\right)^r dz \bigg)\nonumber\\
		&=~C\left(  t^{rH-r/2}\int_0^t z^{r/2- rH}     (t-z)^{-2Hr} dz + t^{r/2- rH}\int_t^T  z^{rH-r/2}    (z-t)^{-2Hr} dz \right)\nonumber\\
		&\leq C\left(\mathcal{B}(r/2- rH+1, 1-2Hr)  t^{1-2rH} +  \int_t^{T }   (z-t)^{-2Hr} dz\right)
		\leq C.
		 \end{align*}
		Here $\mathcal{B}(\cdot,\cdot)$ is the beta function, and also we used the fact that for $H<1/2$   and for $z\geq t$   we have that $z^{rH-r/2}\leq t^{rH-r/2}$
		\item[$(iii,b)$] It is an immediate consequence of $(iii,a)$.
		\item[$(iv)$]   Now we prove that the kernel $ {\kappa}(z,t)$ is non-negative  definite on $ L_2([0,T])$. First, let us see that for  $ x\in L_2([0,T])$ the value $\int_0^T\int_0^T {\kappa}(z,t)x(z)x(t)dzdt$ is correctly defined. Indeed,  let $ x\in L_2([0,T])$, $1<r<\frac1{2H}$ and denote $y(z)=\int_0^T\kappa(z,t)|x(t)|dt.$ Then, taking into account that $2-r<r$, we get
		\begin{equation*}\begin{gathered}y(z) \leq \int_0^T \left(\kappa(z,t)^r|x(t)|^2\right)^{1/2}\kappa(z,t)^{1-r/2}dt
		 \leq \left(\int_0^T  \kappa(z,t)^r|x(t)|^2 dt\right)^{1/2}\\ \times \left(\int_0^T  \kappa(z,t)^{2-r}dt\right)^{1/2}
	 \leq C\left(\int_0^T  \kappa(z,t)^r|x(t)|^2 dt\right)^{1/2}.
		\end{gathered}\end{equation*}
		Further,
		\begin{equation*}\begin{gathered}\int_0^T\int_0^T {\kappa}(z,t)|x(z)||x(t)|dzdt=\int_0^T|x(z)|y(z)dz\\
		 \leq C\int_0^T\left(|x(z)|\left(\int_0^T  \kappa(z,t)^r|x(t)|^2 dt\right)^{1/2}\right)dz\\
	 \leq C\left(\int_0^T |x(z)|^2dz\right)^{1/2}   \left(\int_0^T \int_0^T  \kappa(z,t)^r|x(t)|^2 dtdz\right)^{1/2}
		 \leq C\|x\|^2_{L_2([0,T])}.
		\end{gathered}\end{equation*}
		Consequently,  $\int_0^T\int_0^T {\kappa}(z,t)x(z)x(t)dzdt$ is well-defined. Then   for any $x\in L_2([0,T])$ it follows from Fubini theorem  that
		\begin{align*}&\int_0^T\int_0^T {\kappa}(z,t)x(z)x(t)dzdt \\
		& = \int_0^T\int_0^t {\kappa}(z,t)x(z)x(t)dtdz+\int_0^T\int_t^T {\kappa}(z,t)x(z)x(t)dtdz\nonumber\\
		 &~=\int_0^Tu^{2H-1}du\int_0^udt\left((u-t)^{-1/2-H}x(t)t^{1/2-H}\right)\int_0^tdz\left((u-z)^{-1/2-H}x(z)z^{1/2-H}\right) \nonumber\\
		 &~~+\int_0^Tu^{2H-1}du\int_0^udz\left((u-z)^{-1/2-H}x(z)z^{1/2-H}\right)\int_0^zdt\left((u-t)^{-1/2-H}x(t)t^{1/2-H}\right) \nonumber\\
		&=\int_0^Tu^{2H-1}		\left(\int_0^u(u-z)^{-1/2-H}x(z)z^{1/2-H}dz\right)^2du\geq 0.
		 	\end{align*}
	\end{itemize}
\hfill$\Box$

\textbf{Proof of Theorem \ref{prop:solution3.5}.}
  In terms of operator $A$, the integral equation \eqref {Fredholm} can be written as $(A+\mathbf{I})x=g^\prime$, where $\mathbf{I}$ is the identical operator. Since operator $A$ is self-adjoint and non-negative definite on $L_2([0,T])$ and consequently has only nonnegative eigenvalues, $Ker(A+\mathbf{I})=Ker(A^*+\mathbf{I})=\{0\}$, then Fredholm alternative states that equation $(A+\mathbf{I})x=g^\prime$ has a unique solution in $L_2([0,T])$.
\hfill$\Box$

\textbf{Proof of Theorem \ref{prop:sufficient}.}
 From Theorem \ref{thm:thm2almeida} (see Appendix \ref{app:A2}), the candidate minimizer obtained in Theorem  \ref{prop:solution3.5} minimizes \eqref{eq:minimization} if $L(t,x,y)$, given by \eqref{eq:Lfct} satisfies
	\begin{align*}
	L(t,x+x_1,y+y_1) - L(t,x,y) \geq \partial_2 L(t,x,y)x_1 + \partial_3 L(t,x,y)y_1
	\end{align*}
	for all $(t,x,y)$, $(t,x+x_1,y+y_1)$ in $[0,T]\times \R^2$, where $\partial_2$ ($\partial_3$) stand, respectively, for the differentiation in $x$ (in $y$).
	Using \eqref{eq:Lfct}, the condition is equivalent to
	\begin{equation*}
	x_1^2 + 2C^{-2}_1 t^{2H-1} y_1^2 \geq 0,
	\end{equation*}
	which is satisfied for any $(x_1,y_1) \in \R^2$.
\hfill$\Box$

\bibliographystyle{econometrica}
\bibliography{Ref_fbm_2}

\end{document}